\documentclass[reqno]{amsart}
\usepackage{amssymb}

\setcounter{MaxMatrixCols}{10}

\newcommand{\remove}[1]{ }

\DeclareMathOperator{\card}{Card}
\newtheorem{theorem}{Theorem}[section]
\newtheorem{proposition}[theorem]{Proposition}

\theoremstyle{definition}
\newtheorem{definition}[theorem]{Definition}

\theoremstyle{remark}
\newtheorem{remark}[theorem]{Remark}
\newtheorem{remarks}[theorem]{Remarks}
\newtheorem{examples}[theorem]{Examples}
\newtheorem{example}[theorem]{Example}

\numberwithin{equation}{section}

\begin{document}
\title{Stable schedule matchings by a fixed point method}
\author{Vilmos Komornik}
\address{D\'epartement de math\'ematique\\
Universit\'e de Strasbourg\\
7 rue Ren\'e Des\-car\-tes\\
67084 Strasbourg Cedex, France}
\email{vilmos.komornik@math.unistra.fr}
\author{Zsolt Komornik}
\address{UFR de math\'ematique et d'informatique\\
Universit\'e de Strasbourg\\
7 rue Ren\'e Des\-car\-tes\\
67084 Strasbourg Cedex, France}
\email{z.komornik@gmail.com}
\author{Christelle K. Viauroux}
\address{University of Maryland Baltimore County\\
Department of Economics\\
1000 Hilltop Circle\\
Baltimore, MD 21250, USA}
\email{ckviauro@umbc.edu}
\subjclass{}
\keywords{}
\date{Version 2010-05-11-a}
\thanks{Part of this research was realized during the stay of the first
author at the Department of Mathematics of the University of Cincinnati as a
Taft research fellow in March--June, 2007. He is grateful to the Charles
Phelps Taft Research Center for their kind invitation and for the excellent
working conditions.}

\begin{abstract}
We generalize several schedule matching theorems of Baiou--Ba\-linski (Math.
Oper. Res., 27 (2002), 485) and Alkan--Gale (J. Econ. Th. 112 (2003), 289)
by applying a fixed point method of Fleiner (Math. Oper. Res., 28 (2003),
103). Thanks to a more general construction of revealing choice maps we
develop an algorithm to solve rather complex matching problems. The
flexibility and efficiency of our approach is illustrated by various
examples. We also revisit the mathematical structure of the matching theory
by comparing various definitions of stable sets and various classes of
choice maps. We demonstrate, by several examples, that the revealing
property of the choice maps is the most suitable one to ensure the existence
of stable matchings; both from the theoretical and the practical point of
view.
\end{abstract}

\maketitle

\section{Introduction}

\label{s1}

Since the pioneering paper of Gale and Shapley \cite{GalSha1962} on \emph{%
stable matchings,} many studies have been devoted to the adaptations and the
generalizations of their algorithm. Stable matching algorithms have found
use in diverse economic applications ranging from labor markets to college
admissions or even kidney exchanges.

In these two-sided matching markets, two sets of agents have preferences
over the opposite set: on one side of the market, there are individuals
(students, interns or employees) and on the other side there are
institutions (colleges, hospitals or firms). A ``stable match'' is realized
when all agents have been matched with the opposite side such that neither
could obtain a more mutually beneficial match on their own. 

The original strict preference ordering assumptions proved to be too
restrictive for many real world problems. Following an influential
contribution of Kelso and Crawford \cite{CraKel1982}, Roth \cite{Rot1984}
made a systematic study of a more flexible approach based on \emph{choice
functions}. The monograph of Roth and Sotomayor \cite{RotSot1990} provides
an overview of the state of the art up to 1990 and it still serves as an
excellent introduction to the subject. Feder \cite{Fed1992}, Subramanian 
\cite{Sub1994} and Adachi \cite{Ada2000} discovered a close relationship
between stable matchings and fixed points of set-valued maps. Then Fleiner 
\cite{Fle2003} demonstrated that many classical results may be obtained by a
straightforward application of an old theorem of Knaster \cite{Kna1928} and
Tarski \cite{Tar1928}, \cite{Tar1955}. See also Hatfield and Milgrom \cite%
{HatMil2005} for an economically motivated presentation of the fixed point
method.

More recently, Baiou and Balinski \cite{BaiBal2002} introduced the notion of 
\emph{schedule matching} which made it possible to consider, as a part of
the contract not only the hiring of a particular worker by a particular
firm, but also the number of hours of employment of the worker in the firm.
In their setting, ``stability'' means that no pair of opposite agents can
increase their hours together either due to unused capacity or by giving up
hours with less desirable partners. They assumed that all agents have strict
preference orderings. Alkan and Gale \cite{AlkGal2003} extended their model
by using incomplete revealed preference ordering via choice functions
instead.

In this paper, we generalize the notion of schedule matching of Baiou and
Balinski \cite{BaiBal2002} to allow for schedule and preference constraints
on each side of the market. We define a revealing choice map for each agent
on the acceptable opposite side agent(s), possible days and (combinations
of) restrictions or ``subsets'' placed on the opposite side agent and/or days
worked. In particular, our framework allows for possible quotas placed by
workers on firms and days worked, allowing him to work part-time for
different firms on the same day or on different days, excluding some firms
on some given days or excluding some days of work. In the same manner, it
allows firms to adjust their labor force on certain days depending on their
anticipated activity, or on the requirements associated to different
activities on different days (or the same day). We show that the allocation
of days, firms and workers is stable in the sense that given their schedule
constraints, their preference orderings and constraints, there is no better
schedule for both parties; moreover the stable allocation is shown to be
worker optimal or firm optimal. This is done by using a slightly simplified
version of Fleiner's theorem and by giving a general construction of choice
maps having the revealed preference property. We illustrate the power of our
theorems by several examples. We provide the algorithm that can be used to
obtain the optimal allocation: we will solve a deliberately complex example
to explain its technical execution. Furthermore, in order to discuss the
optimality of our results, we clarify the relationships between various
properties of choice maps and between different definitions of stable sets,
often used in the literature.

The plan of the paper is the following. In Section \ref{s2} we formulate a
model problem which will motivate our research and which may have natural
real-word applications. In Section \ref{s3} we present the mathematical
framework for our model. In Section \ref{s4} we solve the problems of
Section \ref{s2} and we also explain how our results cover some of the
theorems of Alkan and Gale \cite{AlkGal2003}. In Section \ref{s5} we
illustrate the power and flexibility of our method by solving a number of
more complex problems. Section \ref{s6} concludes. The proofs of the
theoretical results are given in Section \ref{s7}.

\section{Schedule matching problems}

\label{s2}

In order to illustrate the novelty of the present work we begin by recalling
the first example of Gale and Shapley \cite{GalSha1962}. They considered
three women: $w_1$, $w_2$, $w_3$ and three men: $f_1$, $f_2$, $f_3$ with the
following preference orders (we change the notations for consistence with
our later examples):

\begin{itemize}
\item Preference order of $w_1$: $f_1\succ f_2\succ f_3$;

\item Preference order of $w_2$: $f_2\succ f_3\succ f_1$;

\item Preference order of $w_3$: $f_3\succ f_1\succ f_2$;

\item Preference order of $f_1$: $w_2\succ w_3\succ w_1$;

\item Preference order of $f_2$: $w_3\succ w_1\succ w_2$;

\item Preference order of $f_3$: $w_1\succ w_2\succ w_3$.
\end{itemize}

They looked for the possibilities of marrying all six people in a stable
way. Instability would occur if there were a woman and a man, not married to
each other who would prefer each other to their actual mates. It turns out
that there are three solutions:

\begin{itemize}
\item each woman gets her first choice: $(w_1,f_1)$, $(w_2,f_2)$, $(w_3,f_3)$%
;

\item each man gets his first choice: $(w_1,f_3)$, $(w_2,f_1)$, $(w_3,f_2)$;

\item everyone get her or his second choice: $(w_{1},f_{2})$, $(w_{2},f_{3})$%
, $(w_{3},f_{1})$.
\end{itemize}

\bigskip 

Now let us modify the problem to a simple job market problem as follows.
Consider three workers: $w_{1}$, $w_{2}$, $w_{3}$ and three firms: $f_{1}$, $%
f_{2}$, $f_{3}$ with the same preference orders for hiring as above.
Furthermore, assume that hiring is for two different days: $d_{1}$, $d_{2}$,
with the following additional preferences and requirements:

\begin{itemize}
\item for each worker--firm pair $(w_i,f_j)$, the worker prefers $d_1$ to $%
d_2$ and the firm prefers $d_2$ to $d_1$;

\item each worker may be hired by at most one firm on each given day (maybe
different firms on different days);

\item each firm may hire at most two workers per day; if they hire two
workers for one given day, then they cannot hire anybody for the other day;

\item no firm may hire the same worker for both days.
\end{itemize}

We are looking for a stable set of contracts, i.e., for a set $S$ of
triplets $(w_i,f_j,d_k)$ having the following properties:

\begin{itemize}
\item each contract $(w_i,f_j,d_k)\in S$ is acceptable for both $w_i$ and $%
f_j$;

\item for any other contract $(w_{i},f_{j},d_{k})\notin S$, either $w_{i}$
and/or $f_{j}$ prefers her/his contracts in $S$ to this new one.
\end{itemize}

The following section presents the theory necessary to address this type of
problems. The solution to this example is given in Section \ref{s4}.

\section{Existence of stable schedule matchings}

\label{s3}

In this section we develop the theoretical framework required to solve
problems like that of the preceding section. The main results are Theorems %
\ref{t37}, \ref{t310} and \ref{t313}. Propositions \ref{p35} and \ref{p36}
contain useful complements and will be used in the proof of Theorem \ref{t37}
but they are not necessary for the understanding and the applications of our
theorems. For the reader's convenience, proofs are postponed to Section \ref%
{s6}, which contains various remarks and examples discussing the optimality
of the results formulated here.

\bigskip 

Given a set $X$, we denote by $2^{X}$ the set of all subsets of $X$. By a 
\emph{choice map} in $X$ we mean a function $C:2^{X}\rightarrow 2^{X}$
satisfying 
\begin{equation}
C(A)\subset A\quad \text{for all}\quad A\subset X.  \label{31}
\end{equation}

\bigskip 

In economic applications $X$ is the set of all possible contracts, and for a
given set $A$ of proposed contracts, $C(A)$ denotes the set of accepted
contracts by some given rules of the market.

\bigskip 

Assume that there are two competing sides, for example \emph{workers and
firms} and correspondingly two choice functions $C_{W},C_{F}:2^{X}%
\rightarrow 2^{X}$.

\begin{definition}
\label{d31} A set $S$ of contracts is said to be \emph{stable} if there
exist two sets $S_W,S_F\subset X$ satisfying the following three conditions: 
\begin{align}
&S_W\cup S_F=X;  \label{32} \\
&C_W(A)=S\quad\text{for every}\quad S\subset A\subset S_W;  \label{33} \\
&C_F(A)=S\quad\text{for every}\quad S\subset A\subset S_F.  \label{34}
\end{align}
\end{definition}

Stable contract sets represent acceptable compromises.

\begin{remark}
\label{r32} A stable set\footnote{%
A more thorough investigation of stable sets is carried out in Proposition %
\ref{p36} below.} $S$ is \emph{individually rational} if 
\begin{equation}
C_{W}(S)=S=C_{F}(S),  \label{35}
\end{equation}%
and it is \emph{not blocked by any other contract}, i.e., for each $x\in X$
we have 
\begin{equation}
\text{either}\quad C_{W}(S\cup \{x\})=S\quad \text{or}\quad C_{F}(S\cup
\{x\})=S\quad \text{(or both).}  \label{36}
\end{equation}
\end{remark}

In order to ensure the existence of stable sets of contracts we need one
additional assumption on the choice maps.

\begin{definition}
\label{d33} We say that a choice map $C:2^X\to 2^X$ is \emph{revealing} (or
satisfies the \emph{revealed preference condition}) if 
\begin{equation}  \label{37}
A,B\subset X\quad\text{and}\quad C(A)\subset B\Longrightarrow A\cap
C(B)\subset C(A).
\end{equation}
\end{definition}

This means that if a contract is rejected from some proposed set $A$ of
contracts, then it will also be rejected from every other proposed set $B$
which contains the accepted contracts.

\begin{example}
\label{e34}\mbox{}

(a) For any fixed set $Y\subset X$ the formula $C(A):=A\cap Y$ defines a
revealing choice map on $X$. This example illustrates a situation where some
contracts are unacceptable to certain agents. \medskip 

(b) More generally, given a finite subset $Y\subset X$, a nonnegative
integer $q$ (called \emph{quota}) and a strict preference ordering $%
y_{1}\succ y_{2}\succ \cdots $ on $Y$, we define a map $C(A)$ for any given $%
A\subset X$ as follows. If $\card(A\cap Y)\leq q$, then we set $C(A):=A\cap Y
$. If $\card(A\cap Y)\succ q$, then let $C(A)$ be the set of the first $q$
elements of $A\cap Y$ according to the ordering of $Y$. Then $%
C:2^{X}\rightarrow 2^{X}$ is a revealing choice map on $X$.

Choice maps of this kind are frequently used in classical matching problems
such as the marriage problem, the college admission problem and various
many-to-many matching problems; see, e.g., \cite{AlkGal2003}, \cite%
{RotSot1990} and the references of the latter.
\end{example}

Before stating our main theorem, we further clarify the relationships
between the revealed preference condition and other usual properties of
choice maps (Proposition 3.5.). We also discuss alternative equivalent
definitions of stable sets (Proposition 3.6.).

\begin{proposition}
\label{p35}\mbox{}

(a) A choice map $C:2^X\to 2^X$ is revealing if and only if it is \emph{%
consistent}: 
\begin{equation}  \label{38}
C(A)\subset B\subset A\Longrightarrow C(B)= C(A)
\end{equation}
and \emph{persistent} (or satisfies the \emph{substitute condition}): 
\begin{equation}  \label{39}
A\subset B\Longrightarrow A\cap C(B)\subset C(A).
\end{equation}

(b) A choice map is persistent if and only if the \emph{rejection map} $%
R:2^X\to 2^X$ defined by $R(A):=A\setminus C(A)$ is \emph{monotone}, i.e., 
\begin{equation}  \label{310}
A\subset B\Longrightarrow R(A)\subset R(B).
\end{equation}

(c) A choice map satisfying either \eqref{38} or \eqref{39} is \emph{%
idempotent}: 
\begin{equation}  \label{311}
C(C(A))=C(A)\quad\text{for all}\quad A\subset X.
\end{equation}
\end{proposition}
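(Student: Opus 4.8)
The plan is to treat each of the three parts by elementary set-inclusion arguments; no topology or fixed-point machinery is needed here, and everything reduces to careful bookkeeping with the defining inclusion $C(A)\subset A$ of a choice map together with the hypotheses \eqref{37}--\eqref{310}.

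For part (a), I would first show that revealing implies both consistency and persistence. Persistence \eqref{39} is immediate: if $A\subset B$, then $C(A)\subset A\subset B$, so the hypothesis of \eqref{37} holds and yields $A\cap C(B)\subset C(A)$. For consistency \eqref{38}, I assume $C(A)\subset B\subset A$ and obtain the two inclusions separately. Applying \eqref{37} with first set $A$ and second set $B$ gives $A\cap C(B)\subset C(A)$, and since $C(B)\subset B\subset A$ this reads $C(B)\subset C(A)$. Applying \eqref{37} with the roles reversed (first set $B$, second set $A$, whose hypothesis $C(B)\subset B\subset A$ again holds) gives $B\cap C(A)\subset C(B)$, and since $C(A)\subset B$ this reads $C(A)\subset C(B)$; hence $C(A)=C(B)$. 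Conversely, assuming consistency and persistence and given $C(A)\subset B$, I would pass to $A\cap B$: because $C(A)\subset A\cap B\subset A$, consistency gives $C(A\cap B)=C(A)$; then persistence applied to $A\cap B\subset B$ gives $(A\cap B)\cap C(B)\subset C(A\cap B)=C(A)$, and since $C(B)\subset B$ the left-hand side equals $A\cap C(B)$, which is exactly the conclusion of \eqref{37}.

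Part (b) is a purely logical equivalence once $A\subset B$ is fixed. Writing out $R(A)\subset R(B)$ as ``$x\in A$ and $x\notin C(A)$ imply $x\in B$ and $x\notin C(B)$'', the clause $x\in B$ is automatic from $A\subset B$, so the statement is equivalent to its contrapositive ``$x\in A$ and $x\in C(B)$ imply $x\in C(A)$'', that is, to $A\cap C(B)\subset C(A)$, which is \eqref{39}. Part (c) splits according to the two hypotheses. Under consistency, substituting $B=C(A)$ into \eqref{38} (the hypothesis $C(A)\subset C(A)\subset A$ holds trivially) gives $C(C(A))=C(A)$ at once. Under persistence, applying \eqref{39} to the inclusion $C(A)\subset A$ yields $C(A)\cap C(A)\subset C(C(A))$, that is $C(A)\subset C(C(A))$, while the reverse inclusion $C(C(A))\subset C(A)$ holds simply because $C$ is a choice map applied to the set $C(A)$; combining the two gives \eqref{311}.

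The only place requiring genuine care is the consistency half of the forward direction of (a): one must invoke the revealing condition \eqref{37} twice, once in each ordering of the pair $(A,B)$, and verify that the hypothesis $C(A)\subset B$ (respectively $C(B)\subset A$) is satisfied each time. Everything else is routine set algebra, so I do not anticipate any serious obstacle.
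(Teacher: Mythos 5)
Your proof is correct in all three parts, and most of it coincides with the paper's own argument: the forward direction of (a) (persistence is immediate from $C(A)\subset A\subset B$, and consistency follows from two applications of \eqref{37} with the roles of $A$ and $B$ exchanged, using $C(B)\subset B\subset A$ to verify the hypothesis the second time), the contrapositive/elementwise reading of $R(A)\subset R(B)$ in (b), and the substitutions $B:=C(A)$ into \eqref{38} and $A:=C(B)$ into \eqref{39} in (c) are all exactly the paper's steps. The one place you genuinely diverge is the converse direction of (a), where you and the paper use dual auxiliary sets. The paper passes to the union: from $A\subset A\cup B$, persistence gives $A\cap C(A\cup B)\subset C(A)$, but consistency cannot yet be applied; the paper must first combine this inclusion with the hypothesis $C(A)\subset B$ to deduce the intermediate fact $C(A\cup B)\subset B$, and only then does \eqref{38} applied to $C(A\cup B)\subset B\subset A\cup B$ yield $C(A\cup B)=C(B)$ and hence \eqref{37}. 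You pass instead to the intersection: the hypothesis $C(A)\subset B$ together with $C(A)\subset A$ gives $C(A)\subset A\cap B\subset A$ at once, so consistency immediately yields $C(A\cap B)=C(A)$ with no intermediate deduction, and persistence on $A\cap B\subset B$ together with $C(B)\subset B$ (which turns $(A\cap B)\cap C(B)$ into $A\cap C(B)$) finishes the proof. Your route is slightly more economical, since the hypothesis of \eqref{38} is verified directly from the assumption rather than extracted from a persistence inclusion; the paper's union-based route has the mild advantage of producing along the way the identity $C(A\cup B)=C(B)$, which is of independent interest (it says the choice from a proposed pool is unchanged by adjoining alternatives none of which would be chosen). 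Both arguments are fully rigorous, so there is nothing to fix.
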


\begin{proposition}
\label{p36} We consider two choice maps $C_W, C_F:2^X\to 2^X$ and three sets 
$S, S_W, S_F\subset X$ satisfying 
\begin{equation}  \label{312}
S_W\cup S_F=X\quad\text{and}\quad C_W(S_W)=S=C_F(S_F).
\end{equation}

(a) If the choice maps $C_W,C_F:2^X\to 2^X$ are idempotent, then $S$ is
individually rational, i.e., it satisfies \eqref{35}.

(b) If at least one of the two choice maps $C_W,C_F:2^X\to 2^X$ is
consistent, then we may modify $S_W$ or $S_F$ such that that $S_W\cap S_F=S$
and \eqref{312} remains valid. \smallskip

(c) If, moreover, both choice maps are consistent, then \eqref{312} is
equivalent to the stability \eqref{32}--\eqref{34} of $S$. \smallskip

(d) If both choice maps $C_W,C_F:2^X\to 2^X$ are revealing, then a set $S$
is stable if and only if it is individually rational, and it is not blocked
by any other contract, i.e., \eqref{32}--\eqref{34} are equivalent to %
\eqref{35}--\eqref{36} (for all $x\in X$.
\end{proposition}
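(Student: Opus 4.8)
The plan is to treat the four assertions in turn, drawing throughout on Proposition \ref{p35}: I will use freely that a revealing map is simultaneously consistent \eqref{38} and persistent \eqref{39}, hence idempotent \eqref{311}, and that persistence is the same as monotonicity \eqref{310} of the rejection map $R$. Part (a) is then a one-line computation: from $C_W(S_W)=S$ and idempotence, $C_W(S)=C_W(C_W(S_W))=C_W(S_W)=S$, and symmetrically $C_F(S)=S$, which is exactly \eqref{35}.

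For part (b), assume $C_W$ is consistent. By \eqref{31} we have $S=C_W(S_W)\subset S_W$ and $S=C_F(S_F)\subset S_F$, so $S\subset S_W\cap S_F$, and I replace $S_W$ by $S_W':=S_W\setminus(S_F\setminus S)$. A direct set computation gives $S\subset S_W'\subset S_W$, $S_W'\cap S_F=S$ and $S_W'\cup S_F=X$, and applying consistency \eqref{38} to the chain $C_W(S_W)=S\subset S_W'\subset S_W$ yields $C_W(S_W')=C_W(S_W)=S$, so \eqref{312} survives with the extra property $S_W'\cap S_F=S$. Part (c) is an equivalence: if $S$ is stable, taking $A=S_W$ in \eqref{33} and $A=S_F$ in \eqref{34} returns \eqref{312}; conversely, given \eqref{312}, \eqref{31} yields $S\subset S_W$, and for every $A$ with $S\subset A\subset S_W$ the chain $C_W(S_W)=S\subset A\subset S_W$ with consistency \eqref{38} forces $C_W(A)=S$, which is \eqref{33}, while \eqref{34} is symmetric and \eqref{32} is part of \eqref{312}.

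Part (d) is the heart of the matter. If $S$ is stable, part (c) gives \eqref{312}, part (a) gives \eqref{35}, and for \eqref{36} I observe that each $x\in X$ lies in $S_W$ or $S_F$, say $x\in S_W$, whence $S\subset S\cup\set{x}\subset S_W$ and \eqref{33} gives $C_W(S\cup\set{x})=S$. For the converse I build the witnesses from \eqref{35}--\eqref{36} by setting $S_W:=\set{x\in X:C_W(S\cup\set{x})=S}$ and $S_F:=\set{x\in X:C_F(S\cup\set{x})=S}$; then \eqref{35} gives $S\subset S_W\cap S_F$ and \eqref{36} gives $S_W\cup S_F=X$, so by part (c) it suffices to prove $C_W(S_W)=S$ and its symmetric counterpart. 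The inclusion $C_W(S_W)\subset S$ is where persistence enters: if some $x\in C_W(S_W)$ were outside $S$, then $x$ would be rejected from $S\cup\set{x}$, and since $S\cup\set{x}\subset S_W$, monotonicity \eqref{310} of the rejection map would place $x\in R_W(S_W)$, contradicting $x\in C_W(S_W)$. The reverse inclusion then follows from consistency: $C_W(S_W)\subset S\subset S_W$ together with \eqref{38} gives $C_W(S)=C_W(S_W)$, and \eqref{35} gives $C_W(S)=S$, whence $C_W(S_W)=S$.

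I expect this converse in part (d) to be the main obstacle, precisely because it is where both halves of the revealing property are indispensable: persistence forces $C_W(S_W)\subset S$ and consistency together with individual rationality forces the opposite inclusion, so neither property by itself would close the argument. A minor point to keep in mind throughout parts (c) and (d) is the tacit convention that the admissible range in \eqref{33}--\eqref{34} is non-empty, i.e.\ that $S\subset S_W$ and $S\subset S_F$; this is what legitimizes choosing $A=S_W$ and $A=S_F$ in the forward direction of part (c).
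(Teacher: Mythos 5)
Your proof is correct and follows essentially the same route as the paper's: the identical idempotence computation in (a), a mirror-image modification in (b) (your $S_W'=S_W\setminus(S_F\setminus S)=S\cup(X\setminus S_F)$ versus the paper's $S_F':=S\cup(X\setminus S_W)$), the same consistency argument in (c), and in (d) the same witness sets $S_W=\{x\in X: C_W(S\cup\{x\})=S\}$ and $S_F=\{x\in X: C_F(S\cup\{x\})=S\}$ with the same two-inclusion argument for $C_W(S_W)=S$. The only cosmetic difference is that in (d) you invoke persistence (via monotonicity of the rejection map) for $C_W(S_W)\subset S$ and consistency plus \eqref{35} for the reverse inclusion, where the paper applies the revealing condition \eqref{37} twice directly --- equivalent by Proposition \ref{p35}; your remark on the tacit convention $S\subset S_W$ and $S\subset S_F$ in \eqref{33}--\eqref{34} is a fair observation that the paper also uses implicitly.
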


Observe that property \eqref{312} below follows from the definition of
stable sets. 

\bigskip 

Our main theorem below shows that the revealed preference condition ensures
the existence of stable sets of contracts:

\begin{theorem}
\label{t37} If $C_W,C_F:2^X\to 2^X$ are two revealing choice maps, then
there exists at least one stable set of contracts.
\end{theorem}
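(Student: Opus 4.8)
The plan is to reduce the existence of a stable set to the Knaster--Tarski fixed point theorem, following the fixed point method attributed to Fleiner. The central idea is to encode the pair of choice maps into a single monotone operator on a complete lattice, whose fixed points correspond exactly to the data $(S_W,S_F)$ witnessing stability in the sense of Definition \ref{d31}.

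First I would set up the lattice. The natural ambient object is the product $2^X\times 2^X$, but to get a \emph{monotone} map I expect to order the two coordinates oppositely: I would use the partial order in which $(A_W,A_F)\le(B_W,B_F)$ means $A_W\subset B_W$ and $A_F\supset B_F$. With this ``opposed'' ordering the product is a complete lattice, since $2^X$ with inclusion is one. Next I would define an operator $T(A_W,A_F):=(\,X\setminus R_F(A_F),\,X\setminus R_W(A_W)\,)$, where $R_W(A):=A\setminus C_W(A)$ and $R_F(A):=A\setminus C_F(A)$ are the rejection maps; the intuition is that the set offered to one side is the complement of what the other side has rejected. By Proposition \ref{p35}(b), the revealing (hence persistent) hypothesis makes each $R_W,R_F$ monotone for inclusion, and I would check that composing a monotone map with complementation, together with the coordinate flip built into the opposed order, makes $T$ monotone for $\le$. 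This monotonicity check is the step that the whole argument turns on, so I would carry it out carefully, tracking how each inclusion reverses.

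Knaster--Tarski then yields a fixed point $(S_W,S_F)$ of $T$, so that $S_W=X\setminus R_F(S_F)$ and $S_F=X\setminus R_W(S_W)$. From these two identities I would extract the stability data. The first task is to produce the set $S$ and verify $S_W\cup S_F=X$. I would set $S:=C_W(S_W)$ and aim to show $S=C_F(S_F)$ as well, so that \eqref{312} holds; the equalities $S_W=X\setminus R_F(S_F)$ and $S_F=X\setminus R_W(S_W)$ should give $C_W(S_W)=S_W\cap S_F=C_F(S_F)$ after rewriting the complements, since $A\setminus R(A)=C(A)$ and $X\setminus(A\setminus C(A))=C(A)\cup(X\setminus A)$. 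Establishing $S_W\cup S_F=X$ and the two choice-map equalities is largely a matter of set-theoretic bookkeeping once the fixed point relations are in hand.

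Finally I would promote \eqref{312} to full stability \eqref{32}--\eqref{34}. Here I can invoke Proposition \ref{p36}: revealing choice maps are in particular consistent (by Proposition \ref{p35}(a)), so part (c) of Proposition \ref{p36} tells me that \eqref{312} is equivalent to the stability conditions \eqref{32}--\eqref{34}. Thus the fixed point directly certifies a stable set $S$, completing the proof. The main obstacle I anticipate is not the lattice-theoretic machinery but the monotonicity verification for $T$ under the opposed ordering: one must confirm that the revealed preference condition, funneled through Proposition \ref{p35}(b) into monotonicity of the rejection maps, is precisely strong enough to make $T$ order-preserving so that Knaster--Tarski applies. Everything downstream is a translation between the fixed point equations and the definitions, secured by the already-stated Propositions.
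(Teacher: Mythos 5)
Your proposal is correct and takes essentially the same route as the paper's own proof: the identical operator $(A,B)\mapsto (X\setminus R_F(B),\,X\setminus R_W(A))$ on $2^X\times 2^X$ with the opposed order, monotonicity of the rejection maps via Proposition \ref{p35}(b), the Knaster--Tarski fixed point theorem, and the identification of fixed points with pairs satisfying \eqref{312}, upgraded to full stability through the consistency given by Proposition \ref{p35}(a) and Proposition \ref{p36}(c). The set-theoretic bookkeeping you defer (deducing $S_W\cup S_F=X$ from $R_F(S_F)\subset S_F$ and $C_W(S_W)=S_W\cap S_F=C_F(S_F)$ from the fixed point identities) goes through exactly as you anticipate.
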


\begin{remark}
\label{r38}\mbox{}

(a) The proof of the theorem, provided in Section \ref{s7}, will show that
the stable sets form a complete lattice for a natural order relation. In
particular, there exists a worker-optimal and a firm-optimal stable set.
\medskip 

(b) In case $X$ is a finite set, the proof of the theorem provides an
efficient algorithm to find a stable set. Starting with $X_0:=X$ we compute
successively $Y_1, X_2, Y_3, X_4,\ldots $ by using the recursive formulae 
\begin{equation*}
Y_{n+1}:=(X\setminus X_n)\cup C_W(X_n) \quad\text{and}\quad
X_{n+1}:=(X\setminus Y_n)\cup C_F(Y_n).
\end{equation*}
There exists a first index $n\ge 1$ such that $X_{n-1}=X_{n+1}$, and then $%
S:=C_W(X_{n-1})$ is the worker-optimal stable set.

Similarly, starting with $Y_0:=X$ we may compute successively $X_1, Y_2,
X_3, Y_4,\ldots $ by the same recursive formulae. There exists a first index 
$n\ge 1$ such that $Y_{n-1}=Y_{n+1}$, and then $S:=C_F(Y_{n-1})$ is the
firm-optimal stable set. See Remark \ref{r62} below for the details. \medskip

(c) The definitions of revealing choice maps, stable sets, the theorem and
the preceding remarks remain valid if we replace $2^X$ by a complete
sublattice $L$ of $2^X$, i.e., a subfamily $L$ of $2^X$ such that the union
and the intersection of any system of sets $A\in L$ still belongs to $L$.
See, e.g., \cite{Fle2003} for more details on lattice properties. \medskip

(d) Part (a) of Proposition \ref{p35} shows that Theorem \ref{t37} is
mathematically equivalent to a theorem of Fleiner \cite{Fle2003}. \medskip

(e) We will show in Examples \ref{e61} (a)--(b) and \ref{e65} of Section \ref%
{s6} that the revealing condition cannot be weakened in Theorem \ref{t37}.
\end{remark}

In order to apply Theorem \ref{t37} for the solution of the problem stated
in Section \ref{s2}, we need a generalization of the construction of
revealing choice maps recalled in Example \ref{e34}. Such a construction is
provided by Theorem \ref{t310} below.

Let we are given a finite subset $Y\subset X$, a family $\{Y_n\}$ of subsets 
$Y_n\subset X$, and corresponding nonnegative integers (called \emph{quotas}%
) $q$ and $q_n$. We assume that the sets $Y_n\cap Y$ are disjoint.
Furthermore, let be given a strict preference ordering $y_1\succ y_2\succ
\cdots $ on $Y$. Given any set $A\subset X$, we define a nondecreasing
sequence $C_0(A)\subset C_1(A)\subset\cdots$ of subsets of $A\cap Y$ by
recursion as follows. First we set $C_0(A)=\varnothing$. If $C_{k-1}(A)$ has
already been defined for some $k$, then we set $C_k(A):=C_{k-1}(A)\cup\{y_k\}
$ if 
\begin{align*}
&y_k\in A, \\
&\card C_{k-1}(A) < q, \\
&\card\left( C_{k-1}(A)\cap Y_n\right) < q_n\text{ if }y_k\in Y_n;
\end{align*}
otherwise we set $C_k(A):=C_{k-1}(A)$. Finally, we define $C(A):=\cup C_k(A)$%
.

\begin{remark}
\label{r39} It follows from the construction that 
\begin{align}
&C(A)\subset A\cap Y;  \label{313} \\
&\card C(A)\le q;  \label{314} \\
&\card\left( C(A)\cap Y_n\right) \le q_n\text{ for all }n.  \label{315}
\end{align}
\end{remark}

\begin{theorem}
\label{t310} $C:2^{X}\rightarrow 2^{X}$ is a revealing choice map.
\end{theorem}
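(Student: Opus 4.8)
The plan is to verify the defining property of a revealing choice map from Definition~\ref{d33}: assuming $C(A)\subset B$, I must show $A\cap C(B)\subset C(A)$. By Proposition~\ref{p35}(a) it would suffice instead to establish the two properties \eqref{38} (consistency) and \eqref{39} (persistence) separately, and this is the route I would take, since each is more tractable than the combined condition and the construction of $C$ is purely sequential. The key to both is a careful bookkeeping of the greedy recursion: the element $y_k$ is admitted into $C_k(A)$ precisely when three local tests pass --- membership $y_k\in A$, the global quota test $\card C_{k-1}(A)<q$, and the group quota test $\card(C_{k-1}(A)\cap Y_n)<q_n$ for the (unique, by disjointness of the $Y_n\cap Y$) index $n$ with $y_k\in Y_n$. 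The whole argument rests on tracking how these tests behave as we vary the input set.

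First I would prove persistence \eqref{39}. Suppose $A\subset B$ and take any $y\in A\cap C(B)$; I want $y\in C(A)$. The natural tool is induction on the index $k$ in the preference ordering, proving the stronger statement that $C_k(B)\cap A\subset C_k(A)$ for every $k$. For the inductive step, when $y_k$ is admitted into $C(B)$ and lies in $A$, the inductive hypothesis gives $C_{k-1}(B)\cap A\subset C_{k-1}(A)$, so the relevant counters for $A$ are bounded above by those for $B$; since the admission tests for $B$ passed and the tests are of the form ``counter $<$ quota,'' the smaller $A$-counters pass the same tests, forcing $y_k$ into $C_k(A)$ as well. This monotonicity of the partial selections in the input set is the heart of the matter.

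Next I would prove consistency \eqref{38}: if $C(A)\subset B\subset A$, then $C(B)=C(A)$. The intuition is that removing from $A$ only elements that were rejected cannot change any admission decision, because the greedy procedure never consults a rejected element when deciding a later one --- the counters depend only on which earlier elements were \emph{accepted}. I would again argue by induction on $k$ that $C_k(B)=C_k(A)$. The crucial point is that whenever $y_k$ was rejected from $A$, either $y_k\notin A$ (hence $y_k\notin B$ since $B\subset A$, and also $y_k\notin C(A)=C(B)$ is consistent), or one of the two quota tests failed; and since by the inductive hypothesis the accepted sets $C_{k-1}(A)$ and $C_{k-1}(B)$ coincide, the counters are identical, so $y_k$ is admitted to $B$ exactly when it is admitted to $A$. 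Here one must use the hypothesis $C(A)\subset B$ to ensure that every element actually selected for $A$ still survives in $B$ and is available when its turn comes.

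The main obstacle I anticipate is purely in the care of the induction: I must confirm that the two quota counters $\card C_{k-1}(\cdot)$ and $\card(C_{k-1}(\cdot)\cap Y_n)$ are genuinely determined by the \emph{accepted} set $C_{k-1}(\cdot)$ and not by the ambient input, which is exactly why consistency holds and why the group-quota constraint (made well-defined by the disjointness of the sets $Y_n\cap Y$) does not spoil the monotonicity needed for persistence. Once both \eqref{38} and \eqref{39} are in hand, Proposition~\ref{p35}(a) delivers that $C$ is revealing, completing the proof. I would also note in passing that properties \eqref{313}--\eqref{315} recorded in Remark~\ref{r39} are immediate from the admission tests and may be invoked freely.
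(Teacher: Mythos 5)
Your overall architecture is legitimate: splitting the revealing property \eqref{37} into consistency \eqref{38} plus persistence \eqref{39} and invoking Proposition~\ref{p35}(a) is a valid alternative to the paper's direct verification of \eqref{37}, and your consistency argument is correct --- it is essentially the paper's own Remark~\ref{r64}(a), including the observation that the quota counters depend only on the accepted set and that $C(A)\subset B\subset A$ forces $y_k\in A\Leftrightarrow y_k\in B$ whenever the quota tests pass. The gap is in the persistence half, at exactly the point that carries the weight of the theorem. From the inductive hypothesis $C_{k-1}(B)\cap A\subset C_{k-1}(A)$ you infer that ``the relevant counters for $A$ are bounded above by those for $B$.'' This inference is backwards: that inclusion bounds $A$'s counters from \emph{below} (by $\card\bigl(C_{k-1}(B)\cap A\cap Y_n\bigr)$), whereas what you need are the upper bounds $\card C_{k-1}(A)\le\card C_{k-1}(B)$ and $\card\bigl(C_{k-1}(A)\cap Y_n\bigr)\le\card\bigl(C_{k-1}(B)\cap Y_n\bigr)$, and these do not follow. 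Indeed, stagewise counter dominance is false in general for $A\subset B$: take $X=Y=\{y_1,y_2\}$ with $y_1\succ y_2$, global quota $q=1$, and $Y_1=\{y_2\}$ with $q_1=1$; for $A=\{y_2\}\subset B=\{y_1,y_2\}$ one gets $C(B)=\{y_1\}$ and $C(A)=\{y_2\}$, so $\card\bigl(C_2(A)\cap Y_1\bigr)=1>0=\card\bigl(C_2(B)\cap Y_1\bigr)$, even though your containment hypothesis $C_j(B)\cap A\subset C_j(A)$ holds at every stage. The phenomenon your induction cannot see is that $B$ may \emph{reject} (because its quotas filled up with better elements) items that the smaller set $A$ accepts.

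The repair is to strengthen the induction to the counter inequalities themselves --- this is the paper's display \eqref{63}: for the target element $y_k\in A\cap C(B)$ one proves $\card\bigl(C_j(A)\cap Y_n\bigr)\le\card\bigl(C_j(B)\cap Y_n\bigr)$ for all $n$ and $j=0,\dots,k-1$ only, and the critical case $y_{j+1}\in C_{j+1}(A)\setminus C_{j+1}(B)$ is resolved only by using the hypothesis $y_k\in C(B)$ at the intermediate stages: since $y_k$ is accepted later, $\card C_j(B)\le\card C_{k-1}(B)=\card C_k(B)-1\le q-1$, so the global quota of $B$ cannot be the binding constraint before stage $k$; the rejection must then come from the group quota of the (by disjointness unique) $Y_m\ni y_{j+1}$, where $\card\bigl(C_j(B)\cap Y_m\bigr)=q_m$ restores the inequality after $A$ admits $y_{j+1}$. (In the counterexample above, $A\cap C(B)=\varnothing$, so no such $y_k$ exists --- which is exactly why the inequality may fail there without contradicting persistence.) Your sketch never invokes $y_k\in C(B)$ during the induction, so this mechanism is missing and the inductive step is unsupported; with the counter lemma in place, your route does go through, and in fact the paper's proof runs this very induction under the single hypothesis $C(A)\subset B$, obtaining \eqref{37} directly without passing through the decomposition of Proposition~\ref{p35}(a).
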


\begin{remark}
\label{r311}\mbox{}

\begin{itemize}
\item[(a)] If $q_{n}\geq q$ or $q_{n}\geq \card(Y)$ for some $n$, then we
may eliminate $Y_{n}$ and $q_{n}$ without changing the construction.

\item[(b)] If there are no sets $Y_{n}$, then our construction reduces to
Example \ref{e34} (b).

\item[(c)] If, moreover, $q\geq \card(Y)$, then our construction reduces to
Example \ref{e34} (a). (In this case the choice of the order relation is
irrelevant.)

\item[(d)] Instead of a finite subset $Y\subset X$, we can also consider
arbitrary subsets $Y\subset X$ with a well-ordered preference relation: the
construction and the proof of the proposition remain valid.
\end{itemize}
\end{remark}

\begin{example}
\label{e312} The disjointness condition is necessary. To show this, consider
the sets $X=Y=\{a,b,c\}$, $Y_{1}=\{a,b\}$, $Y_{2}=\{b,c\}$ with the quotas $%
q=2$, $q_{1}=q_{2}=1$ and the preference order $a\succ b\succ c$. Then for $%
A=\{b,c\}$ and $B=\{a,b,c\}$ we have $C(A)=\{b\}$ and $C(B)=\{a,c\}$, so
that $A\subset B$ but $A\cap C(B)\not\subset C(A)$.
\end{example}

Theorem \ref{t310} can be often used for the construction of \emph{individual%
} revealing choice functions. The following result enables us to combine
individual revealing choice functions into \emph{global} revealing choice
functions.

\begin{theorem}
\label{t313} Given a set function $C:2^X\to 2^X$ and a partition $X=\cup X_i$
with disjoint sets $X_i$, we define the set functions $C_i:2^{X_i}\to 2^{X_i}
$ by the formula $C_i(A_i):=C(A_i)\cap X_i$. Then $C$ is a revealing choice
map on $X$ if and only if each $C_i$ is a revealing choice map on $X_i$.
\end{theorem}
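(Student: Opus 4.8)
The plan is to treat the two implications separately, since they are not symmetric: the forward one holds for the given $C$ with no extra hypothesis, whereas the converse can close only once one knows that $C$ is reconstructed from its blocks, i.e.\ that $C(A)=\bigcup_i C_i(A\cap X_i)$ for every $A\subset X$. This blockwise relation is precisely what it means to \emph{assemble} the global map from the individual ones, and it cannot be dropped: for $X=\{a,b\}$ with blocks $\{a\},\{b\}$ and $C(\{a\})=\{a\}$, $C(\{b\})=\{b\}$, $C(\{a,b\})=\emptyset$, both induced maps $C_i$ are the (revealing) identity, yet $C$ violates consistency \eqref{38}. I would therefore state the relation $C(A)\cap X_i=C_i(A\cap X_i)$ explicitly, reading it together with the given formula $C_i(A_i)=C(A_i)\cap X_i$ as the correspondence between a blockwise global map and its family of block maps, exactly as in the assembly of $C_W,C_F$ from the individual agents' choice functions used throughout the paper.

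First I would dispose of the forward implication, which needs nothing beyond the stated hypotheses. The point is that for $A_i\subset X_i$ one has $C(A_i)\subset A_i\subset X_i$, so intersecting with $X_i$ changes nothing and $C_i(A_i)=C(A_i)$; thus each $C_i$ is simply the restriction of $C$ to the sublattice $2^{X_i}$. The inclusion $C_i(A_i)\subset A_i$ is then clear, and to check \eqref{37} I would take $A_i,B_i\subset X_i$ with $C_i(A_i)\subset B_i$, rewrite this as $C(A_i)\subset B_i$, and apply the revealing property of $C$ to obtain $A_i\cap C(B_i)\subset C(A_i)$, that is $A_i\cap C_i(B_i)\subset C_i(A_i)$. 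Hence every $C_i$ is revealing.

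For the converse I would invoke Proposition \ref{p35}(a) and verify consistency \eqref{38} and persistence \eqref{39} of $C$ one block at a time, the relation $C(A)\cap X_i=C_i(A\cap X_i)$ reducing each to the corresponding property of $C_i$. That $C$ is a choice map is immediate from $C(A)=\bigcup_i C_i(A\cap X_i)\subset\bigcup_i(A\cap X_i)=A$. For persistence, given $A\subset B$ I would write $A\cap C(B)=\bigcup_i\bigl((A\cap X_i)\cap C_i(B\cap X_i)\bigr)$ and apply persistence of $C_i$ to the inclusion $A\cap X_i\subset B\cap X_i$, placing each term inside $C_i(A\cap X_i)$ and hence the union inside $C(A)$. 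For consistency, given $C(A)\subset B\subset A$ I would intersect with $X_i$ and use $C(A)\cap X_i=C_i(A\cap X_i)$ to get $C_i(A\cap X_i)\subset B\cap X_i\subset A\cap X_i$, invoke consistency of $C_i$ to obtain $C_i(B\cap X_i)=C_i(A\cap X_i)$, and take the union over $i$ to conclude $C(B)=C(A)$.

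The main obstacle is conceptual and lies entirely in the converse: the family $\{C_i\}$ constrains $C$ only on sets contained in a single block and says nothing about $C(A)$ when $A$ meets several blocks, so the implication cannot close unless $C$ is known to act blockwise. Once that relation is in force the verification is routine, each step being the image under $\bigcup_i$ of the corresponding one-block statement, after which Proposition \ref{p35} converts the pair (consistency, persistence) back into the revealing property. I would accordingly foreground the blockwise relation as the standing link between $C$ and the $C_i$, making transparent the single hypothesis on which the converse genuinely depends.
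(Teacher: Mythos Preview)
Your diagnosis of the converse direction is correct, and the counterexample is valid: with $X=\{a,b\}$, singleton blocks, $C(\{a\})=\{a\}$, $C(\{b\})=\{b\}$, $C(\{a,b\})=\varnothing$, both induced maps are the (revealing) identity while $C$ fails consistency \eqref{38} at $A=\{a,b\}$, $B=\{a\}$. The biconditional as literally stated is therefore false; the missing hypothesis is exactly the blockwise identity $C(A)\cap X_i=C_i(A\cap X_i)$ for every $A\subset X$, which is how the result is actually invoked in Sections~\ref{s4}--\ref{s5} (the global maps $C_W,C_F$ are \emph{defined} by that formula). The paper's own proof tacitly assumes it: its second displayed equivalence replaces $C(B)\cap X_i$ by $C_i(B_i)$ and $C(A)\cap X_i$ by $C_i(A_i)$, which is precisely the blockwise relation. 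So your insistence on stating it explicitly is well placed.

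On method, your forward direction coincides with the paper's (restrict to a single block, where $C_i=C\vert_{2^{X_i}}$ because $C$ is a choice map). For the converse you route through Proposition~\ref{p35}(a), checking consistency and persistence of $C$ separately; this is correct but longer than needed. Once the blockwise relation is granted, the paper's argument verifies \eqref{37} for $C$ directly: if $C(A)\subset B$, intersecting with $X_i$ gives $C_i(A_i)\subset B_i$, the revealing property of $C_i$ yields $A_i\cap C_i(B_i)\subset C_i(A_i)$, and the union over $i$ returns $A\cap C(B)\subset C(A)$. Your approach makes the dependence on the hypotheses more transparent; the paper's is a one-line computation once those hypotheses are in place.
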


\section{Solution to the simple job market problem}

\label{s4}

For the solution we set 
\begin{equation*}
W:=\{w_1, w_2, w_3\},\quad F:=\{f_1, f_2, f_3\},\quad D:=\{d_1,d_2\}
\end{equation*}
and we proceed in several steps. \medskip

\emph{Step 1.} For each fixed worker $w_i$ we define a revealing choice map $%
C_{w_i}$ on $\{w_i\}\times F\times D$ by applying Theorem \ref{t310} with $Y$%
, $q$, $Y_n$ and $q_n$ given below. For brevity we write $(i,j,k)$ instead
of $(w_i,f_j,d_k)$ in the preference relations.

\begin{itemize}
\item For worker $w_1$ we choose 
\begin{align*}
&Y:=\{w_1\}\times \{f_1,f_2,f_3\}\times \{d_1,d_2\}, \\
&Y_1:=\{w_1\}\times \{f_1,f_2,f_3\}\times \{d_1\}, \\
&Y_2:=\{w_1\}\times \{f_1,f_2,f_3\}\times \{d_2\}
\end{align*}
with quotas $q=6$ (which is ineffective), $q_1=q_2=1$ and the following
preference relation on $Y$: 
\begin{equation*}
(1,1,1)\succ (1,1,2)\succ (1,2,1)\succ (1,2,2)\succ (1,3,1)\succ (1,3,2).
\end{equation*}

\item For worker $w_2$ we choose 
\begin{align*}
&Y:=\{w_2\}\times \{f_1,f_2,f_3\}\times \{d_1,d_2\}, \\
&Y_1:=\{w_2\}\times \{f_1,f_2,f_3\}\times \{d_1\}, \\
&Y_2:=\{w_2\}\times \{f_1,f_2,f_3\}\times \{d_2\}
\end{align*}
with quotas $q=6$, $q_1=q_2=1$ and the following preference relation on $Y$: 
\begin{equation*}
(2,2,1)\succ (2,2,2)\succ (2,3,1)\succ (2,3,2)\succ (2,1,1)\succ (2,1,2).
\end{equation*}

\item For worker $w_3$ we choose 
\begin{align*}
&Y:=\{w_3\}\times \{f_1,f_2,f_3\}\times \{d_1,d_2\}, \\
&Y_1:=\{w_3\}\times \{f_1,f_2,f_3\}\times \{d_1\}, \\
&Y_2:=\{w_3\}\times \{f_1,f_2,f_3\}\times \{d_2\}
\end{align*}
with quotas $q=6$, $q_1=q_2=1$ and the following preference relation on $Y$: 
\begin{equation*}
(3,3,1)\succ (3,3,2)\succ (3,1,1)\succ (3,1,2)\succ (3,2,1)\succ (3,2,2).
\end{equation*}
\end{itemize}

\medskip

\emph{Step 2.} Applying Theorem \ref{t313} we combine the three choice maps
of the preceding step into a global revealing choice map $C_{W}$ on $W\times
F\times D$ by setting 
\begin{equation*}
C_{W}(A):=\bigcup_{i=1}^{3}C_{w_{i}}\left( A\cap (\{w_{i}\}\times F\times
D)\right) 
\end{equation*}%
for every $A\subset W\times F\times D$.

\bigskip 

\emph{Step 3.} For each firm $f_{j}$ we define a revealing choice map $%
C_{f_{j}}$ on $W\times \{f_{j}\}\times D$ by applying Theorem \ref{t310}
with $Y$, $q$, $Y_{n}$ and $q_{n}$ given below and still writing $(i,j,k)$
instead of $(w_{i},f_{j},d_{k})$ for brevity.

\begin{itemize}
\item For firm $f_1$ we choose 
\begin{align*}
&Y:=\{w_1, w_2, w_3\}\times \{f_1\}\times \{d_1,d_2\}, \\
&Y_1:=\{w_1\}\times \{f_1\}\times \{d_1,d_2\}, \\
&Y_2:=\{w_2\}\times \{f_1\}\times\{d_1,d_2\}, \\
&Y_3:=\{w_3\}\times \{f_1\}\times\{d_1,d_2\},
\end{align*}
with quotas $q=2$, $q_1=q_2=q_3=1$ and the following preference relation on $%
Y$: 
\begin{equation*}
(2,1,2)\succ (2,1,1)\succ (3,2,2)\succ (3,2,1)\succ (1,3,2)\succ (1,3,1).
\end{equation*}

\item For firm $f_2$ we choose 
\begin{align*}
&Y:=\{w_1, w_2, w_3\}\times \{f_2\}\times \{d_1,d_2\}, \\
&Y_1:=\{w_1\}\times \{f_2\}\times \{d_1,d_2\}, \\
&Y_2:=\{w_2\}\times \{f_2\}\times\{d_1,d_2\}, \\
&Y_3:=\{w_3\}\times \{f_2\}\times\{d_1,d_2\},
\end{align*}
with quotas $q=2$, $q_1=q_2=q_3=1$ and the following preference relation on $%
Y$: 
\begin{equation*}
(3,2,2)\succ (3,2,1)\succ (1,2,2)\succ (1,2,1)\succ (2,2,2)\succ (2,2,1).
\end{equation*}

\item For firm $f_3$ we choose 
\begin{align*}
&Y:=\{w_1, w_2, w_3\}\times \{f_3\}\times \{d_1,d_2\}, \\
&Y_1:=\{w_1\}\times \{f_3\}\times \{d_1,d_2\}, \\
&Y_2:=\{w_2\}\times \{f_3\}\times\{d_1,d_2\}, \\
&Y_3:=\{w_3\}\times \{f_3\}\times\{d_1,d_2\},
\end{align*}
with quotas $q=2$, $q_1=q_2=q_3=1$ and the following preference relation on $%
Y$: 
\begin{equation*}
(1,3,2)\succ (1,3,1)\succ (2,3,2)\succ (2,3,1)\succ (3,3,2)\succ (3,3,1).
\end{equation*}
\end{itemize}

\medskip

\emph{Step 4.} Applying Theorem \ref{t313} we combine the three choice maps
of the preceding step into a global revealing choice map $C_F$ on $W\times
F\times D$ by setting 
\begin{equation*}
C_F(A):= \bigcup_{j=1}^3C_{f_j}\left( A\cap (W\times \{f_j\}\times D)\right)
,\quad A\subset W\times F\times D.
\end{equation*}

\medskip

\emph{Step 5.} The choice maps $C_W$ and $C_F$ satisfy the hypotheses of
Theorem \ref{t37}. We apply the algorithm as described in Remark \ref{r38}
(b) by starting with $X_0:=X$ and computing $Y_1, X_2, Y_3,X_4$ by the
formulae 
\begin{equation*}
Y_{n+1}:=(X\setminus X_n)\cup C_W(X_n) \quad\text{and}\quad
X_{n+1}:=(X\setminus Y_n)\cup C_F(Y_n).
\end{equation*}
We obtain that $X_2=X_4$ and therefore $S=C_W(X_2)$. The results are
summarized in the following table:

\begin{center}
\begin{tabular}{c|c|c||c|c|c|c|c|c}
$w_i$ & $f_j$ & $d_k$ & $X_0$ & $Y_1$ & $X_2$ & $Y_3$ & $X_4$ & $S$ \\ 
\hline\hline
1 & 1 & 1 & $x$ & $x$ &  & $x$ &  &  \\ 
1 & 1 & 2 & $x$ & $x$ & $x$ & $x$ & $x$ & $x$ \\ 
1 & 2 & 1 & $x$ &  & $x$ & $x$ & $x$ & $x$ \\ 
1 & 2 & 2 & $x$ &  & $x$ &  & $x$ &  \\ 
1 & 3 & 1 & $x$ &  & $x$ &  & $x$ &  \\ 
1 & 3 & 2 & $x$ &  & $x$ &  & $x$ &  \\ \hline
2 & 1 & 1 & $x$ &  & $x$ &  & $x$ &  \\ 
2 & 1 & 2 & $x$ &  & $x$ &  & $x$ &  \\ 
2 & 2 & 1 & $x$ & $x$ &  & $x$ &  &  \\ 
2 & 2 & 2 & $x$ & $x$ & $x$ & $x$ & $x$ & $x$ \\ 
2 & 3 & 1 & $x$ &  & $x$ & $x$ & $x$ & $x$ \\ 
2 & 3 & 2 & $x$ &  & $x$ &  & $x$ &  \\ \hline
3 & 1 & 1 & $x$ &  & $x$ & $x$ & $x$ & $x$ \\ 
3 & 1 & 2 & $x$ &  & $x$ &  & $x$ &  \\ 
3 & 2 & 1 & $x$ &  & $x$ &  & $x$ &  \\ 
3 & 2 & 2 & $x$ &  & $x$ &  & $x$ &  \\ 
3 & 3 & 1 & $x$ & $x$ &  & $x$ &  &  \\ 
3 & 3 & 2 & $x$ & $x$ & $x$ & $x$ & $x$ & $x$%
\end{tabular}
\end{center}

In this \emph{worker-optimal} solution each worker is hired by the second
most preferred firm for the first day and by the most preferred firm for the
second day. \medskip

\emph{Step 6.} Applying the algorithm of Remark \ref{r38} (b) by starting
with $Y_0:=X$ and computing $X_1, Y_2, X_3, Y_4$ by the above formulae we
obtain that $Y_2=Y_4$ and therefore $S=C_F(Y_2)$. The results are summarized
in the following table:

\begin{center}
\begin{tabular}{c|c|c||c|c|c|c|c|c}
$w_i$ & $f_j$ & $d_k$ & $Y_0$ & $X_1$ & $Y_2$ & $X_3$ & $Y_4$ & $S$ \\ 
\hline\hline
1 & 1 & 1 & $x$ &  & $x$ &  & $x$ &  \\ 
1 & 1 & 2 & $x$ &  & $x$ &  & $x$ &  \\ 
1 & 2 & 1 & $x$ &  & $x$ &  & $x$ &  \\ 
1 & 2 & 2 & $x$ & $x$ & $x$ & $x$ & $x$ & $x$ \\ 
1 & 3 & 1 & $x$ &  & $x$ & $x$ & $x$ & $x$ \\ 
1 & 3 & 2 & $x$ & $x$ &  & $x$ &  &  \\ \hline
2 & 1 & 1 & $x$ &  & $x$ & $x$ & $x$ & $x$ \\ 
2 & 1 & 2 & $x$ & $x$ &  & $x$ &  &  \\ 
2 & 2 & 1 & $x$ &  & $x$ &  & $x$ &  \\ 
2 & 2 & 2 & $x$ &  & $x$ &  & $x$ &  \\ 
2 & 3 & 1 & $x$ &  & $x$ &  & $x$ &  \\ 
2 & 3 & 2 & $x$ & $x$ & $x$ & $x$ & $x$ & $x$ \\ \hline
3 & 1 & 1 & $x$ &  & $x$ &  & $x$ &  \\ 
3 & 1 & 2 & $x$ & $x$ & $x$ & $x$ & $x$ & $x$ \\ 
3 & 2 & 1 & $x$ &  & $x$ & $x$ & $x$ & $x$ \\ 
3 & 2 & 2 & $x$ & $x$ &  & $x$ &  &  \\ 
3 & 3 & 1 & $x$ &  & $x$ &  & $x$ &  \\ 
3 & 3 & 2 & $x$ &  & $x$ &  & $x$ & 
\end{tabular}
\end{center}

In this \emph{firm-optimal} solution each firm hires the most preferred
worker for the first day and by the second most preferred worker for the
second day.

\begin{remark}
\label{r41} The stable schedule matchings as studied by Baiou and Balinski 
\cite{BaiBal2002} and Alkan and Gale \cite{AlkGal2003} enter the present
framework as a special case. For simplicity we consider the discrete case
and we denote by $D=\{1,2,\ldots \}$ the possible number of working hours
with $k$ meaning the $k$th working hour. For each worker $w_{i}$, if there
is a preference ranking $f_{j_{1}}\succ f_{j_{2}}\succ \cdots $ among the
firms, then we extend it to the preference ranking 
\begin{align*}
& (i,j_{1},1)\succ (i,j_{1},2)\succ \cdots \succ (i,j_{1},q_{i,j_{1}}^{w}) \\
\succ & (i,j_{2},1)\succ (i,j_{2},2)\succ \cdots \succ
(i,j_{2},q_{i,j_{2}}^{w}) \\
\succ & \cdots  \\
& \vdots 
\end{align*}%
where $q_{i,j}^{w}$ denotes the maximum number of working hours accepted by
worker $w_{i}$ in firm $f_{j}$. Similarly, for each firm $f_{j}$, if there
is a preference ranking $w_{i_{1}}\succ w_{i_{2}}\succ \cdots $ among the
workers, then we extend it to the preference ranking 
\begin{align*}
& (i_{1},j,1)\succ (i_{1},j,2)\succ \cdots \succ (i_{1},j,q_{i_{1},j}^{f}) \\
\succ & (i_{2},j,1)\succ (i_{2},j,2)\succ \cdots \succ
(i_{2},j,q_{i_{2},j}^{f}) \\
\succ & \cdots  \\
& \vdots 
\end{align*}%
where $q_{i,j}^{f}$ denotes the maximum number of working hours accepted by
firm $f_{j}$ for worker $w_{i}$. Once a stable set $S$ found, the number of
working hours of worker $w_{i}$ in firm $f_{j}$ is the biggest integer $k$
such that $(i,j,k)\in S$.
\end{remark}

\section{More complex examples}

\label{s5}

We illustrate in this section the strength and flexibility of our theorems
and algorithms by solving some more complex problems.

We consider the following modeling issue. We are given a finite number of
workers $w_{i}$, firms $f_{j}$ and days $d_{k}$ (days of a week or days of a
month for instance). Each worker may work at one or several firms per day,
maybe at different firms on different days. Similarly, each firm may hire a
given number of workers per day, maybe differents numbers on different days.

\bigskip 

A \emph{contract} is by definition a triple $(w_{i},f_{j},d_{k})$ meaning
that worker $w_{i}$ is hired by firm $f_{j}$ for day $d_{k}$, and we are
looking for an acceptable set of contracts, subject to various requirements
of both workers and firms. Thus, each worker $w_{i}$

\begin{itemize}
\item may exclude some firm--day pairs $(f_j,d_k)$ considered unacceptable;

\item has a strict preference ordering among the remaining firm--day pairs;

\item may put some other restrictions, such as

\begin{itemize}
\item to set a maximum quota $q^w_i$ of accepted firm--day pairs;

\item not to work on day $d_k$ at more than a given number $q^w_{i,k}$ of
firms;

\item or not to work at firm $f_j$ more than a given number $\tilde q^w_{i,j}
$ of days.
\end{itemize}
\end{itemize}

Similarly, each firm $f_j$

\begin{itemize}
\item may exclude some worker--day pairs $(w_i,d_k)$ considered unacceptable;

\item has a strict preference ordering among the remaining worker--day pairs;

\item may put some other restrictions, such as

\begin{itemize}
\item to set a maximum quota $q^f_j$ of worker--day pairs for hiring;

\item not to hire on day $d_k$ more than a given number $q^f_{j,k}$ of
workers;

\item or not to hire worker $w_i$ for more than a given number $\tilde
q^f_{i,j}$ of days.
\end{itemize}
\end{itemize}

\begin{remarks}
\label{r51}\mbox{}

\begin{itemize}
\item[(a)] Although we keep strict preference ordering on worker-day pairs
or on firm-day pairs, these preference ordering are not sufficient to
characterize the choice map of each agent: they also depend on the quota
system.

\item[(b)] In most applications we may assume that a worker does not work at
more than one firm per day, so that $q^w_{i,k}=1$ for every $k$; then $q_i$
means the maximum number of working days for the worker $w_i$.
\end{itemize}
\end{remarks}

\subsection{First problem}

\label{ss51}

Assume that we have four workers $w_1$, $w_2$, $w_3$, $w_4$ and three firms $%
f_1$, $f_2$, $f_3$. Each worker may work at most at one firm per day, maybe
at different firms on different days of the week. The further requirements
of the agents are listed below.

\begin{itemize}
\item Worker $w_1$ can work at most 4 days per week, with the following
strict preference order of the firm--day pairs $(f_j,d_k)$ where we write $%
(j,k)$ instead of $(f_j,d_k)$ for brevity: 
\begin{align}  \label{51}
&(2,1)\succ (3,1)\succ (2,2)\succ (3,2)\succ (2,3)\succ (3,3)\succ (2,4) \\
\succ &(3,4)\succ (2,5)\succ (3,5)\succ (2,6)\succ (3,6)\succ (2,7)\succ
(3,7) .  \notag
\end{align}
This list shows for instance that worker $w_1$ prefers most to be hired by
firm $f_2$ for Mondays ($d_1$), then by firm $f_3$ always for Mondays, next
by firm $f_2$ for Tuesdays ($d_2$), and so on. The absence of firm $f_1$ in
the list shows that worker $w_1$ refuses to be hired by that firm.

\item Worker $w_2$ can work at most 3 days per week, with the following
strict preference order: 
\begin{align}  \label{52}
&(1,1)\succ (1,2)\succ (1,3)\succ (1,4)\succ (1,5)\succ (1,6)\succ (1,7) \\
\succ &(2,1)\succ (2,2)\succ (2,3)\succ (2,4)\succ (2,5)\succ (2,6)\succ
(2,7)  \notag \\
\succ &(3,1)\succ (3,2)\succ (3,3)\succ (3,4)\succ (3,5)\succ (3,6)\succ
(3,7).  \notag
\end{align}

\item Worker $w_3$ can work at most 2 days per week, with the following
strict preference order: 
\begin{align}  \label{53}
&(2,2)\succ (2,3)\succ (3,2)\succ (3,3)\succ (1,2)\succ (1,3) \\
\succ &(2,4)\succ (2,5)\succ (2,6)\succ (1,4)\succ (1,5)\succ (1,6)  \notag
\\
\succ &(3,6)\succ (3,4)\succ (3,5).  \notag
\end{align}
The list shows in particular that he/she does not work on Mondays ($d_1$)
and Sundays ($d_7$).

\item Worker $w_4$ accepts to work on all days of the week, with the
following strict preference order: 
\begin{align}  \label{54}
&(1,1)\succ (1,2)\succ (1,3)\succ (1,4)\succ (1,5)\succ (1,6)\succ (1,7) \\
\succ &(2,1)\succ (2,2)\succ (2,3)\succ (2,4)\succ (2,5)\succ (2,6)\succ
(2,7)  \notag \\
\succ &(3,1)\succ (3,2)\succ (3,3)\succ (3,4)\succ (3,5)\succ (3,6)\succ
(3,7).  \notag
\end{align}

\item Firm $f_1$ may hire up to 4 workers per day when it is open, with the
following strict preference order of the worker--day pairs $(w_i,d_k)$ where
we write $(i,k)$ instead of $(w_i,d_k)$ for brevity: 
\begin{align}  \label{55}
&(1,1)\succ (1,2)\succ (1,3)\succ (1,4)\succ (1,5)\succ (1,6) \\
\succ &(2,1)\succ (2,2)\succ (2,3)\succ (2,4)\succ (2,5)\succ (2,6)  \notag
\\
\succ &(3,1)\succ (3,2)\succ (3,3)\succ (3,4)\succ (3,5)\succ (3,6).  \notag
\end{align}
The list shows in particular that the firm is closed on Sundays and that it
doesn't hire worker $w_4$. Otherwise, it prefers most to hire worker $w_1$
for Mondays, then worker $w_1$ for Tuesdays, and so on.

\item Firm $f_2$ may also hire up to 4 workers per day, with the following
strict preference order: 
\begin{align}  \label{56}
&(3,7)\succ (3,6)\succ (3,5)\succ (3,4)\succ (3,3)\succ (3,2)\succ (3,1) \\
\succ &(4,7)\succ (4,6)\succ (4,5)\succ (4,4)\succ (4,3)\succ (4,2)\succ
(4,1)  \notag \\
\succ &(1,7)\succ (1,6)\succ (1,5)\succ (1,4)\succ (1,3)\succ (1,2)\succ
(1,1)  \notag \\
\succ &(2,7)\succ (2,6)\succ (2,5)\succ (2,4)\succ (2,3)\succ (2,2)\succ
(2,1).  \notag
\end{align}

\item Firm $f_3$ is closed on Saturdays and Sundays; for the other days it
may hire up to 4 workers per day, with the following strict preference
order: 
\begin{align}  \label{57}
&(4,1)\succ (4,2)\succ (4,3)\succ (4,4)\succ (4,5) \\
\succ &(3,1)\succ (3,2)\succ (3,3)\succ (3,4)\succ (3,5)  \notag \\
\succ &(2,1)\succ (2,2)\succ (2,3)\succ (2,4)\succ (2,5)  \notag \\
\succ &(1,1)\succ (1,2)\succ (1,3)\succ (1,4)\succ (1,5).  \notag
\end{align}
\end{itemize}

Our task is to find an acceptable firm-worker assignment and work schedule
under these constraints. For the solution we set 
\begin{equation*}
W:=\{w_1, w_2, w_3, w_4\},\quad F:=\{f_1, f_2, f_3\},\quad
D:=\{d_1,\ldots,d_7\}
\end{equation*}
and we proceed in several steps. \medskip

\emph{Step 1.} For each fixed worker $w_i$ we define a revealing choice map $%
C_{w_i}$ on $\{w_i\}\times F\times D$ by applying Theorem \ref{t310} with $%
Y:=Y^w_i$, $q:=q^w_i$, $Y_n:=Y^w_{i,n}$ and $q_n:=q^w_{i,n}$ given below.
For brevity we write $(i,j,k)$ instead of $(w_i,f_j,d_k)$ in the preference
relations.

\begin{itemize}
\item For worker $w_1$ we choose 
\begin{equation*}
Y^w_1:=\{w_1\}\times \{f_2,f_3\}\times D
\end{equation*}
representing the set of acceptable firms and days of worker $w_1$, with
quota $q^w_1=4$ and the following preference relation on $Y^w_1$ (see %
\eqref{51}): 
\begin{align*}
&(1,2,1)\succ (1,3,1)\succ (1,2,2)\succ (1,3,2)\succ (1,2,3) \\
\succ &(1,3,3)\succ (1,2,4)\succ (1,3,4)\succ (1,2,5)\succ (1,3,5) \\
\succ &(1,2,6)\succ (1,3,6)\succ (1,2,7)\succ (1,3,7) .
\end{align*}
Furthermore, we set 
\begin{equation*}
Y^w_{1,k}:=\{w_1\}\times F\times \{d_k\}\text{ and } q^w_{1,k}=1\text{ for }
k=1,\ldots, 7.
\end{equation*}

\item For worker $w_2$ we choose 
\begin{equation*}
Y^w_2:=\{w_2\}\times F\times D
\end{equation*}
with quota $q^w_2=3$ and the preference relation 
\begin{align*}
&(2,1,1)\succ (2,1,2)\succ (2,1,3)\succ (2,1,4)\succ (2,1,5) \\
\succ &(2,1,6)\succ (2,1,7)\succ (2,2,1)\succ (2,2,2)\succ (2,2,3) \\
\succ &(2,2,4)\succ (2,2,5)\succ (2,2,6)\succ (2,2,7)\succ (2,3,1) \\
\succ &(2,3,2)\succ (2,3,3)\succ (2,3,4)\succ (2,3,5)\succ (2,3,6)\succ
(2,3,7).
\end{align*}
on $Y^w_2$ (see \eqref{52}). Furthermore, we set 
\begin{equation*}
Y^w_{2,k}:=\{w_2\}\times F\times \{d_k\}\text{ and } q^w_{2,k}=1\text{ for }
k=1,\ldots, 7.
\end{equation*}

\item For worker $w_3$ we choose 
\begin{equation*}
Y^w_3:=\{w_3\}\times F\times \{d_2,\ldots,d_6\}
\end{equation*}
with quota $q^w_3=2$ and the preference relation 
\begin{align*}
&(3,2,2)\succ (3,2,3)\succ (3,3,2)\succ (3,3,3)\succ (3,1,2) \\
\succ &(3,1,3)\succ (3,2,4)\succ (3,2,5)\succ (3,2,6)\succ (3,1,4) \\
\succ &(3,1,5)\succ (3,1,6)\succ (3,3,6)\succ (3,3,4)\succ (3,3,5).
\end{align*}
on $Y^w_3$ (see \eqref{53}). Furthermore, we set 
\begin{equation*}
Y^w_{3,k}:=\{w_3\}\times F\times \{d_k\}\text{ and } q^w_{3,k}=1\text{ for }
k=1,\ldots, 7.
\end{equation*}

\item For worker $w_4$ we choose 
\begin{equation*}
Y^w_4:=\{w_4\}\times F\times D
\end{equation*}
with quota $q^w_4=7$ and the preference relation 
\begin{align*}
&(4,1,1)\succ (4,1,2)\succ (4,1,3)\succ (4,1,4)\succ (4,1,5) \\
\succ &(4,1,6)\succ (4,1,7)\succ (4,2,1)\succ (4,2,2)\succ (4,2,3) \\
\succ &(4,2,4)\succ (4,2,5)\succ (4,2,6)\succ (4,2,7)\succ (4,3,1) \\
\succ &(4,3,2)\succ (4,3,3)\succ (4,3,4)\succ (4,3,5)\succ (4,3,6)\succ
(4,3,7).
\end{align*}
on $Y^w_4$ (see \eqref{54}). Furthermore, we set 
\begin{equation*}
Y^w_{4,k}:=\{w_4\}\times F\times \{d_k\}\text{ and } q^w_{4,k}=1\text{ for }
k=1,\ldots, 7.
\end{equation*}
\end{itemize}

\medskip

\emph{Step 2.} Applying Theorem \ref{t313} we combine the four choice maps
of the preceding step into a global revealing choice map $C_{W}$ on $W\times
F\times D$ by setting 
\begin{equation*}
C_{W}(A):=\bigcup_{i=1}^{4}C_{w_{i}}\left( A\cap (\{w_{i}\}\times F\times
D)\right) 
\end{equation*}%
for every $A\subset W\times F\times D$.

\bigskip 

\emph{Step 3.} For each firm $f_{j}$ we define a revealing choice map $%
C_{f_{j}}$ on $W\times \{f_{j}\}\times D$ by applying Theorem \ref{t310}
again, this time with $Y:=Y_{j}^{f}$, $q:=q_{j}^{f}$, $Y_{n}:=Y_{j,n}^{f}$
and $q_{n}:=q_{j,n}^{f}$ given below and still writing $(i,j,k)$ instead of $%
(w_{i},f_{j},d_{k})$ for brevity.

\begin{itemize}
\item For firm $f_1$ we choose 
\begin{equation*}
Y^f_1:=W\times \{f_1\}\times \{d_1,\ldots,d_6\}
\end{equation*}
with quota $q^f_1=24$ and the following preference relation on $Y^f_1$ (see %
\eqref{55}): 
\begin{align*}
&(1,1,1)\succ (1,1,2)\succ (1,1,3)\succ (1,1,4)\succ (1,1,5) \\
\succ &(1,1,6)\succ (2,1,1)\succ (2,1,2)\succ (2,1,3)\succ (2,1,4) \\
\succ &(2,1,5)\succ (2,1,6)\succ (3,1,1)\succ (3,1,2)\succ (3,1,3) \\
\succ &(3,1,4)\succ (3,1,5)\succ (3,1,6).
\end{align*}

\item For firm $f_2$ we choose 
\begin{equation*}
Y^f_2:=W\times \{f_2\}\times D
\end{equation*}
with quota $q^f_2=28$ and the following preference relation on $Y^f_2$ (see %
\eqref{57}): 
\begin{align*}
&(3,2,7)\succ (3,2,6)\succ (3,2,5)\succ (3,2,4)\succ (3,2,3) \\
\succ &(3,2,2)\succ (3,2,1)\succ (4,2,7)\succ (4,2,6)\succ (4,2,5) \\
\succ &(4,2,4)\succ (4,2,3)\succ (4,2,2)\succ (4,2,1)\succ (1,2,7) \\
\succ &(1,2,6)\succ (1,2,5)\succ (1,2,4)\succ (1,2,3)\succ (1,2,2) \\
\succ &(1,2,1)\succ (2,2,7)\succ (2,2,6)\succ (2,2,5)\succ (2,2,4) \\
\succ &(2,2,3)\succ (2,2,2)\succ (2,2,1).
\end{align*}

\item For firm $f_3$ we choose 
\begin{equation*}
Y^f_3:=W\times \{f_3\}\times \{d_1,\ldots,d_5\}
\end{equation*}
with quota $q^f_3=20$ and the following preference relation on $Y^f_3$ (see %
\eqref{57}): 
\begin{align*}
&(4,3,1)\succ (4,3,2)\succ (4,3,3)\succ (4,3,4)\succ (4,3,5) \\
\succ &(3,3,1)\succ (3,3,2)\succ (3,3,3)\succ (3,3,4)\succ (3,3,5) \\
\succ &(2,3,1)\succ (2,3,2)\succ (2,3,3)\succ (2,3,4)\succ (2,3,5) \\
\succ &(1,3,1)\succ (1,3,2)\succ (1,3,3)\succ (1,3,4)\succ (1,3,5).
\end{align*}
\end{itemize}

\medskip

\emph{Step 4.} Applying Theorem \ref{t313} we combine the three choice maps
of the preceding step into a global revealing choice map $C_F$ on $W\times
F\times D$ by setting 
\begin{equation*}
C_F(A):= \bigcup_{j=1}^3C_{f_j}\left( A\cap (W\times \{f_j\}\times D)\right)
,\quad A\subset W\times F\times D.
\end{equation*}

\medskip

\emph{Step 5.} The choice maps $C_{W}$ and $C_{F}$ satisfy the hypotheses of
Theorem \ref{t37}. Applying the algorithm as described in Remark \ref{r38}
(b) by starting with $X_{0}:=X$, we use a computer program to make the
otherwise tedious computation. We obtain the following worker-optimal stable
schedule: 
\begin{equation*}
(w_{1},f_{2},1-4),\quad (w_{2},f_{1},1-3),\quad (w_{3},f_{2},2-3),\quad
(w_{4},f_{2},1-7).
\end{equation*}%
The notations means that

\begin{itemize}
\item $f_1$ hires worker $w_2$ for Mondays, Tuesdays and Wednesdays;

\item $f_2$ hires worker $w_1$ for Mondays, Tuesdays, Wednesdays and
Thursdays, worker $w_3$ for Tuesdays and Wednesdays, and worker $w_4$ for
all seven days of the week;

\item $f_3$ does not hire anybody.
\end{itemize}

\medskip

\emph{Step 6.} Applying the algorithm of Remark \ref{r38} (b) by starting
with $Y_{0}:=X$ we obtain the same solution. This means that the
worker-optimal and firm-optimal solutions coincide, and that there is a
unique stable schedule matching in this case.

The remaining of this section investigates the changes in the solutions if
we modify our requirements in various ways.

\subsection{Second problem}

\label{ss52}

If worker $w_3$ accepts to work up to four days per week (so we change $%
q^w_3=2$ to $q^w_3=4$), then the worker-optimal and firm-optimal solutions
still coincide: the stable schedule is given by the list 
\begin{equation*}
(w_1,f_2,1-4),\quad (w_2,f_1,1-3),\quad (w_3,f_2,2-5),\quad (w_4,f_2,1-7).
\end{equation*}
The only change with respect to the preceding case is that $f_2$ now hires $%
w_3$ for Thursdays and Fridays, too.

\subsection{Third problem}

\label{ss53}

We modify the problem such that $f_2$ hires at most one worker per day, so
that for the construction of the choice map $C_{f_2}$ we add the extra
conditions 
\begin{equation*}
Y^f_{1,k}:=W\times \{f_2\}\times \{d_k\}\text{ and } q^f_{1,k}=1\text{ for }
k=1,\ldots, 7.
\end{equation*}
The changes are more important. Both the worker-optimal solution and
firm-optimal solutions are given by the list 
\begin{equation*}
(w_1,f_3,1-4),\quad (w_2,f_1,1-3),\quad (w_3,f_2,2-3),\quad (w_4,f_2,1,
4-7),\quad (w_4,f_3,2-3).
\end{equation*}

\subsection{Fourth problem}

\label{ss54}

Now assume that

\begin{itemize}
\item firm $f_1$ does not hire any worker for more than two days;

\item firm $f_2$ does not hire Worker $w_1$ for more than three days;

\item firm $f_2$ does not hire Worker $w_4$ for more than three days either.
\end{itemize}

We proceed as in Subsection \ref{ss51} but in constructing $C_{1}^{f}$ we
add the extra conditions 
\begin{equation*}
\tilde{Y}_{i,1}^{f}:=\{w_{i}\}\times \{f_{1}\}\times D\text{ and }\tilde{q}%
_{i,1}^{f}:=2\text{ for }i=1,2,3,4,
\end{equation*}%
and in constructing $C_{2}^{f}$ we add the extra conditions 
\begin{align*}
& \tilde{Y}_{1,2}^{f}:=\{w_{1}\}\times \{f_{2}\}\times D\text{ and }\tilde{q}%
_{1,2}^{f}:=3, \\
& \tilde{Y}_{4,2}^{f}:=\{w_{4}\}\times \{f_{2}\}\times D\text{ and }\tilde{q}%
_{1,2}^{f}:=3.
\end{align*}%
Now the worker-optimal and firm-optimal stable schedules differ: they are
given by 
\begin{align*}
& (w_{1},f_{2},2-4),\quad (w_{1},f_{3},1),\quad (w_{2},f_{1},1-2),\quad
(w_{2},f_{2},3), \\
& (w_{3},f_{2},2-3),\quad (w_{4},f_{2},5-7),\quad (w_{4},f_{3},1-4)
\end{align*}%
and 
\begin{align*}
& (w_{1},f_{2},5-7),\quad (w_{1},f_{3},1),\quad (w_{2},f_{1},1-2),\quad
(w_{2},f_{2},3), \\
& (w_{3},f_{2},2-3),\quad (w_{4},f_{2},5-7),\quad (w_{4},f_{3},1-4),
\end{align*}%
respectively.

\bigskip 

\section{Concluding remarks}

\label{s6}

The \emph{schedule matching} problem extends the standard matching procedure
to the allocation of real numbers (days, hours or quantities) between two
separate sets of agents. The present paper generalizes the notion of
schedule matching to allow for schedule and preference constraints on each
side of the market. We demonstrate, by several example, that the revealing
property of the choice maps is the most suitable one to ensure the existence
of stable matchings. We also revisit the mathematical structure of the
matching theory by comparing various definitions of stable sets and various
classes of choice maps.

The generality of our analysis \ is not only theoretically interesting but
is potentially useful in application as well. 

In certain, highly-competitive, labor markets employers perceive a shortage
of top-level candidates that lead to hiring strategies intended to hire
those who are believed to be the best. Competition within these paradigms
inevitably leads to ever-evolving, if not escalating, dynamic reactions on
both sides in an effort to maximize overall gain. As intended, the strategy
forces the candidates make ever quicker decisions, before they can know, and
weigh, other offers that may be proffered in the near future. As a
consequence, candidates end up having less opportunities and employers less
potential candidates than were originally available in the market. This
results in sub-optimal matches that spawn a myriad of both observable and
hidden costs on both sides of the market. The eventual failure of this
common strategy ultimately mandates sets of new rules and procedures or
market re-design. The algoritm proposed here could be used as a
``clearinghouse'' in situations where quotas are placed by workers on firms
and days worked, allowing him to work part-time for different firms on the
same day or on different days, excluding some firms on some given days or
excluding some days of work. In the same manner, the algorithm is applicable
to situations where firms to need to adjust their labor force on certain
days depending on their anticipated activity, or on the requirements
associated to different activities on different days or the same day. 

\section{Proof of the theorems of Section \protect\ref{s3} and supplementary
results}

\label{s7}

First we prove Propositions \ref{p35} and \ref{p36}. Then we apply them to
establish Theorem \ref{t37}. In the second, independent part of the section
we prove Theorems \ref{t310} and \ref{t313}.

\begin{proof}[Proof of Proposition \protect\ref{p35}]
\mbox{}

(a) Assume that $C:2^X\to 2^X$ is revealing. Then it is persistent because $%
C(A)\subset A$ for every choice map.

In order to prove the consistence first we observe that in case $C(A)\subset
B$ we infer from our hypothesis $C(A)\subset B\subset A$ and from the choice
map property $C(B)\subset B$ that $C(B)\subset A$. Therefore using \eqref{37}
we have 
\begin{align*}
\intertext{and}
&C(B)\subset A \Longrightarrow B\cap C(A)\subset C(B) \Longleftrightarrow
C(A)\subset C(B),
\end{align*}
so that $C(A)= C(B)$. \medskip

Now assume that $C:2^X\to 2^X$ is consistent and persistent, and consider
two sets satisfying $C(A)\subset B$. We have to prove that $A\cap
C(B)\subset C(A)$.

Since $A\subset A\cup B$, applying \eqref{39} we obtain that 
\begin{equation}  \label{61}
A\cap C(A\cup B)\subset C(A).
\end{equation}
The proof will be completed by showing that $C(A\cup B)=C(B)$.

Using the hypothesis $C(A)\subset B$ we deduce from \eqref{61} that $C(A\cup
B)\subset B$. Therefore $C(A\cup B)\subset B\subset A\cup B$, and the
equality $C(A\cup B)=C(B)$ follows by applying \eqref{38}. \medskip

(b) If the choice map is consistent, then its idempotence follows by
applying \eqref{38} with $B=C(A)$. If $C$ is persistent, then applying %
\eqref{39} with $A=C(B)$ we get $C(B)\subset C(C(B))$. The converse
inclusion also holds because $C$ is a choice map. \medskip

(c) If $A\subset B$, then 
\begin{align*}
R(A)\subset R(B)& \Longleftrightarrow A\setminus C(A)\subset B\setminus C(B)
\\
& \Longleftrightarrow A\setminus C(A)\subset A\setminus C(B) \\
& \Longleftrightarrow A\cap C(B)\subset C(A).\qedhere
\end{align*}
\end{proof}

\begin{examples}
\label{e61}\mbox{}

(a) Consider a two-point set $X=\{a,b\}$ and the four choice maps defined by
the following formulae:

\begin{tabular}{c||c|c|c|c}
$A$ & $\varnothing$ & $\{a\}$ & $\{b\}$ & $\{a,b\}$ \\ \hline\hline
$C_1(A)$ & $\varnothing$ & $\{a\}$ & $\{b\}$ & $\{a\}$ \\ \hline
$C_2(A)$ & $\varnothing$ & $\varnothing$ & $\{b\}$ & $\varnothing$ \\ \hline
$C_3(A)$ & $\varnothing$ & $\varnothing$ & $\varnothing$ & $\{a,b\}$ \\ 
\hline
$C_4(A)$ & $\varnothing$ & $\varnothing$ & $\varnothing$ & $\{a\}$%
\end{tabular}

One may readily verify that

\begin{itemize}
\item $C_1$ is revealing,

\item $C_2$ is persistent but not consistent,

\item $C_3$ is consistent but not persistent,

\item $C_4$ is not idempotent.
\end{itemize}

One may check that every idempotent choice map on $X$ is either consistent
or persistent (or both). \medskip

(b) Consider a three-point set $X=\{a,b,c\}$ and the choice map $%
C_{5}:2^{X}\rightarrow 2^{X}$ defined by

$%
\begin{array}{ccc}
C_{5}(\{a\})=\varnothing , & C_{5}(X)=\{b\}, & C_{5}(A)=A\quad \text{%
otherwise.}%
\end{array}%
$ 

Then $C_{5}$ is idempotent but neither consistent, nor persistent.\bigskip 
\end{examples}

\begin{proof}[Proof of Proposition \protect\ref{p36}]
\mbox{}

(a) Using the idempotence of $C_{W}$ and $C_{F}$ we deduce from \eqref{312}
that 
\begin{equation*}
S=C_{F}(S_{F})=C_{F}(C_{F}(S))=C_{F}(S).
\end{equation*}

(b) Assume that $C_{F}$ is consistent (the other case is similar) set $%
S_{F}^{\prime }:=S\cup (X\setminus S_{W})$. Then $S_{W}\cup S_{F}^{\prime }=X
$, $S_{W}\cap S_{F}^{\prime }=S$ and we still have $C_{W}(S_{W})=S$.
Furthermore, since 
\begin{equation*}
C_{F}(S_{F})=S\subset S_{F}^{\prime }\subset S_{F},
\end{equation*}%
using the consistence of $C_{F}$ we conclude that $C_{F}(S_{F}^{\prime })=S$%
. \medskip 

(c) As we already observed \eqref{32}--\eqref{34} imply \eqref{312}.
Conversely, \eqref{312} contains \eqref{32}; furthermore, by the consistency
of $C_{W}$ and $C_{F},$ \eqref{312} implies \eqref{33}--\eqref{34}. \medskip 

(d) If $S$ is a stable set, then \eqref{35}--\eqref{36} follow from %
\eqref{34}--\eqref{37}. Now assume \eqref{35} and \eqref{36}. Setting 
\begin{equation*}
S_{W}:=\{x\in X\ :\ C_{W}(S\cup \{x\})=S\}\quad \text{and}\quad
S_{F}:=\{x\in X\ :\ C_{F}(S\cup \{x\})=S\}
\end{equation*}%
we have $S_{W}\cup S_{F}=X$ by \eqref{36}. In view of the consistence it
remains to show that $C_{W}(S_{W})=S=C_{F}(S_{F})$.

If $x\in S_{W}\setminus S$, then applying the revealed preference property
and using \eqref{36} we deduce from the inclusion $C_{W}(S\cup \{x\})\subset
S_{W}$ that 
\begin{equation*}
(S\cup \{x\})\cap C_{W}(S_{W})\subset C_{W}(S\cup \{x\})=S
\end{equation*}%
and hence $x\notin C_{W}(S_{W})$. We have thus $C_{W}(S_{W})\subset S$.
Applying again the revealed preference property we deduce from this last
inclusion that 
\begin{equation*}
S_{W}\cap C_{W}(S)\subset C_{W}(S_{W}).
\end{equation*}%
Since $C_{W}(S)=S$ by \eqref{35}, it follows that $S\subset C_{W}(S_{W})$,
so that finally $C_{W}(S_{W})=S$. The proof of $C_{F}(S_{F})=S$ is similar.
\end{proof}

\begin{proof}[Proof of Theorem \protect\ref{t37}]
Let us introduce the map $f:2^{X}\times 2^{X}\rightarrow 2^{X}\times 2^{X}$
by the formula 
\begin{equation*}
f(A,B):=(X\setminus R_{F}(B),X\setminus R_{W}(A))
\end{equation*}%
where $R_{F}$, $R_{W}$ denote the rejection maps corresponding to $C_{F}$
and $C_{W}$. We observe that $2^{X}\times 2^{X}$ is a non-empty complete
lattice with respect to the order relation 
\begin{equation*}
(A,B)\leq (A^{\prime },B^{\prime })\Longleftrightarrow A\subset A^{\prime
}\quad \text{and}\quad B\supset B^{\prime }.
\end{equation*}%
Furthermore, $f$ is monotone with respect to this order relation. Indeed,
using the monotonicity of the rejection maps we have 
\begin{align*}
(A,B)\leq (A^{\prime },B^{\prime })& \Longleftrightarrow A\subset A^{\prime
}\quad \text{and}\quad B\supset B^{\prime } \\
& \Longrightarrow R_{W}(A)\subset R_{W}(A^{\prime })\quad \text{and}\quad
R_{F}(B)\supset R_{F}(B^{\prime }) \\
& \Longrightarrow X\setminus R_{F}(B)\subset X\setminus R_{F}(B^{\prime })%
\text{ and }X\setminus R_{W}(A)\supset X\setminus R_{W}(A^{\prime }) \\
& \Longleftrightarrow f(A,B)\leq f(A^{\prime },B^{\prime }).
\end{align*}%
Applying a fixed point theorem of Knaster and Tarski \cite{Kna1928}, \cite%
{Tar1928}, \cite{Tar1955} we conclude that $f$ has at least one fixed point
and that the fixed points of $f$ form a complete lattice. It remains to show
that the fixed points of $f$ coincide with the stable sets. More precisely,
in view of Proposition \ref{p36} it is sufficient to prove that 
\begin{equation*}
f(A,B)=(A,B)\Longleftrightarrow A\cup B=X\quad \text{and}\quad
C_{W}(A)=A\cap B=C_{F}(B).
\end{equation*}

If $f(A,B)=(A,B)$, then $A=X\setminus R_{F}(B)$ and $B=X\setminus R_{W}(A)$.
Since $R_{F}(B)\subset B$, it follows from the first relation that $A\cup B=X
$. Furthermore, the first relation also implies that $A$ is the disjoint
union of the sets $X\setminus B$ and $C_{F}(B)$ and hence that $A\cap
B\subset C_{F}(B)\subset A$. Since $C_{F}$ is a choice map, we also have $%
C_{F}(B)\subset B$ and therefore $C_{F}(B)=A\cap B$. The proof of the
equality $C_{W}(A)=A\cap B$ is analogous.

Conversely, if $A\cup B=X$ and $C_{W}(A)=A\cap B=C_{F}(B)$, then 
\begin{multline*}
X\setminus R_{F}(B)=(X\setminus B)\cup C_{F}(B) \\
=((A\cup B)\setminus B)\cup (A\cap B)=(A\setminus B)\cup (A\cap B)=A
\end{multline*}%
and 
\begin{multline*}
X\setminus R_{W}(A)=(X\setminus A)\cup C_{W}(A) \\
=((A\cup B)\setminus A)\cup (A\cap B)=(B\setminus A)\cup (A\cap B)=B,
\end{multline*}%
so that $f(A,B)=(A,B)$.
\end{proof}

\begin{remark}
\label{r62}\mbox{}

(a) In case $X$ is a finite set, the proof of the theorem provides an
efficient algorithm to find a stable set. Starting with $(X_{0},Y_{0}):=X%
\times \varnothing $ we define a sequence $(X_{1},Y_{1})$, $(X_{2},Y_{2})$%
,\ldots by the recursive relations 
\begin{equation*}
(X_{n+1},Y_{n+1}):=(X\setminus R_{F}(Y_{n}),X\setminus R_{W}(X_{n})),\quad
n=0,1,\ldots ,
\end{equation*}%
i.e., 
\begin{equation*}
X_{n+1}:=(X\setminus Y_{n})\cup C_{F}(Y_{n})\quad \text{and}\quad
Y_{n+1}:=(X\setminus X_{n})\cup C_{W}(X_{n}),\quad n=0,1,\ldots .
\end{equation*}%
Since we have obviously $X_{1}\subset X=X_{0}$ and $Y_{1}\supset \varnothing
=Y_{0}$, by the monotonicity of $f$ we conclude that 
\begin{equation*}
X_{0}\supset X_{1}\supset \cdots \quad \text{and}\quad Y_{0}\subset
Y_{1}\subset \cdots .
\end{equation*}%
Since $X$ has only finitely many subsets, there exists an index $n$ such
that 
\begin{equation*}
(X_{n+1},Y_{n+1})=(X_{n},Y_{n}),
\end{equation*}%
and then $S:=X_{n}\cap Y_{n}$ is a stable set. As a matter of fact, we
obtain in this way the worker-optimal stable set. Similarly, we may
construct the firm-optimal stable set by the same recurrence relations if we
start from $(X_{0},Y_{0}):=\varnothing \times X$. \medskip 

(b) If we start with $(X_{0},Y_{0}):=X\times \varnothing $, then we obtain $%
X_{1}=X\setminus R_{F}(\varnothing )=X$ and therefore 
\begin{equation}
X_{0}=X_{1},\quad Y_{1}=Y_{2},\quad X_{2}=X_{3},\quad Y_{3}=Y_{4},\ldots .
\label{62}
\end{equation}%
This implies that the above algorithm is equivalent to the more economical
Gale--Shapley algorithm. There we start with $X_{0}:=X$ and we compute
successively 
\begin{equation*}
Y_{1},X_{2},Y_{3},X_{4},\ldots 
\end{equation*}%
by using the recursive formulae 
\begin{equation*}
Y_{n+1}:=(X\setminus X_{n})\cup C_{W}(X_{n})\quad \text{and}\quad
X_{n+1}:=(X\setminus Y_{n})\cup C_{F}(Y_{n}).
\end{equation*}%
We stop when we obtain $X_{n-1}=X_{n+1}$ for the first time, and we set $%
S=C_{W}(X_{n-1})$. Indeed, the equalities \eqref{62} and $X_{n-1}=X_{n+1}$
imply that 
\begin{equation*}
X_{n-1}=X_{n}=X_{n+1}=X_{n+2}=\cdots \quad \text{and}\quad
Y_{n}=Y_{n+1}=Y_{n+2}=Y_{n+3}\cdots .
\end{equation*}%
Therefore $(X_{n+1},Y_{n+1})=(X_{n},Y_{n})$, and 
\begin{equation*}
S=X_{n}\cap Y_{n}=X_{n-1}\cap \left( (X\setminus X_{n-1})\cup
C_{W}(X_{n-1})\right) =C_{W}(X_{n-1}).
\end{equation*}%
In the last step we used that $C_{W}(X_{n-1})\subset X_{n-1}$ because $C_{W}$
is a choice map.

Analogously, we may construct the firm-optimal stable set by starting with $%
Y_{0}:=X$, computing successively $X_{1},Y_{2},X_{3},Y_{4},\ldots $ by the
same formulae as above, and setting $S=C_{F}(Y_{n-1})$ for the first $n$
such that $Y_{n-1}=Y_{n+1}$.
\end{remark}

\bigskip 

\begin{examples}
\label{e63}\mbox{}

(a) We cannot replace the revealed preference condition with the substitutes
condition in Theorem \ref{t37}. To show this consider the choice maps $%
C_{W}:=C_{1}$ and $C_{F}:=C_{2}$ of Example \ref{e61} (a) on the set $%
X=\{a,b\}$. Then $C_{W}$ is revealing and $C_{F}$ is persistent. However,
there is no stable set. Indeed, we have $C_{W}(S)=S=C_{F}(S)$ only if $%
S=\varnothing $ or $S=\{b\}$, so that only these two sets are individually
rational (see Remark \ref{r32}). However, $S=\varnothing $ is blocked by $%
\{b\}$ because 
\begin{equation*}
C_{W}(S\cup \{b\})=C_{F}(S\cup \{b\})=\{b\}\neq S,
\end{equation*}%
and $S=\{b\}$ is blocked by $\{a\}$ because 
\begin{equation*}
C_{W}(S\cup \{a\})=\{a\}\neq S\quad \text{and}\quad C_{F}(S\cup
\{a\})=\varnothing \neq S.
\end{equation*}%
Hence none of these sets is stable. \medskip 

(b) We cannot replace the revealed preference condition with the consistence
in Theorem \ref{t37} either. To show this consider the choice maps $%
C_{W}:=C_{1}$ and $C_{F}:=C_{3}$ of Example \ref{e61} (a) on the set $%
X=\{a,b\}$. Then $C_{W}$ is revealing and $C_{F}$ is consistent. However,
there is no stable matching. Indeed, we have $C_{W}(S)=S=C_{F}(S)$ only if $%
S=\varnothing $, so this is the only individually rational set. For $%
S=\varnothing $ the condition \eqref{33} is satisfied only if $%
S_{W}=\varnothing $, and then $S_{F}=X$ by \eqref{32}. However, then $%
C_{F}(S_{F})=X\neq S$, so that \eqref{34} fails.
\end{examples}

\bigskip 

Now we turn to the proofs of Theorems \ref{t310} and \ref{t313}. They are
independent of the preceding part of the present section.

\begin{proof}[Proof of Theorem \protect\ref{t310}]
The choice map $C$ remains the same if we change each $Y_n$ to $Y_n\cap Y$
in the construction. The choice map does not change either if we complete
the family $\{Y_n\}$ with $Y^{\prime }:=Y\setminus\cup Y_n$ corresponding to
the quota $q^{\prime }:=\card Y^{\prime }$. Without loss of generality we
assume henceforth that $\{Y_n\}$ is a \emph{partition} of $Y$, i.e., $Y$ is
the \emph{disjoint union} of the sets $Y_n$.

Let $A,B\subset X$ be two sets satisfying $C(A)\subset B$; we have to show
that if $y_k\in A\cap C(B)$ for some $k$, then $y_k\in C(A)$.

First we establish by induction on $j$ the following inequalities: 
\begin{equation}  \label{63}
\card (C_j(A)\cap Y_n)\le \card (C_j(B)\cap Y_n)\text{ for all }n,\quad
j=0,\ldots, k-1.
\end{equation}

For $j=0$ our claim reduces to the trivial equality $0=0$. Assuming that the
inequalities hold until some $j<k-1$, consider the (unique) index $m$ for
which $y_{j+1}\in Y_m$. For each $n\ne m$ we have 
\begin{equation*}
C_j(A)\cap Y_n=C_{j+1}(A)\cap Y_n\text{ and }C_j(B)\cap Y_n=C_{j+1}(B)\cap
Y_n
\end{equation*}
and therefore 
\begin{equation*}
\card (C_{j+1}(A)\cap Y_n)\le \card (C_{j+1}(B)\cap Y_n)
\end{equation*}
by our induction hypothesis. For $n=m$ the only critical case is when 
\begin{equation*}
y_{j+1}\in C_{j+1}(A)\setminus C_{j+1}(B).
\end{equation*}
Since $y_{j+1}\in C(A)$ implies $y_{j+1}\in B$ and since 
\begin{equation*}
\card C_j(B)\le \card C_{k-1}(B)\le q-1
\end{equation*}
because $y_k\in C(B)$ and therefore 
\begin{equation*}
\card C_{k-1}(B)=\card C_k(B)-1\le q-1,
\end{equation*}
by the construction this can only happen if 
\begin{equation*}
\card (C_j(A)\cap Y_m)\le q_m-1\text{ and }\card (C_j(B)\cap Y_m)=q_m.
\end{equation*}
But then we have 
\begin{align*}
\card (C_{j+1}(A)\cap Y_m) &=\card (C_j(A)\cap Y_m)+1 \\
&\le q_m \\
&=\card (C_j(B)\cap Y_m) \\
&=\card (C_{j+1}(B)\cap Y_m)
\end{align*}
as required.

Since $y_k\in A\cap C(B)$, we have $y_k\in A$. Furthermore, since $%
C(A)\subset Y$ and the sets $Y_n$ form a partition of $Y$, it follows from %
\eqref{63} that 
\begin{align*}
\card C_{k-1}(A) &=\cup_{n}\card (C_{k-1}(A)\cap Y_n) \\
&\le \cup_{n}\card (C_{k-1}(B)\cap Y_n) \\
&= \card C_{k-1}(B) \\
&=\card C_k(B)-1 \\
&\le q-1
\end{align*}
because $C_k(B)\setminus C_{k-1}(B)=\{y_k\}$.

Furthermore, in case $y_k\in Y_n$ we have 
\begin{equation*}
(C_k(B)\cap Y_n)\setminus (C_{k-1}(B)\cap Y_n)=\{y_k\}
\end{equation*}
and therefore 
\begin{align*}
\card\left( C_{k-1}(A)\cap Y_n\right) &\le\card \left(C_{k-1}(B)\cap
Y_n\right) \\
&=\card \left(C_k(B)\cap Y_n\right)-1 \\
&\le q_n-1.
\end{align*}

Summarizing, the conditions \eqref{313}--\eqref{315} are satisfied and we
conclude that $y_{k}\in C(A)$ by construction. This completes the proof.
\end{proof}

\begin{remark}
\label{r64}\mbox{}

(a) The choice map constructed in Theorem \ref{t310} is consistent even if
the sets $Y_n\cap Y$ are not disjoint. Indeed, if $C(A)\subset B\subset A$,
then comparing the construction of 
\begin{equation*}
C_0(A)\subset C_1(A)\subset\cdots\text{ and }C_0(B)\subset
C_1(B)\subset\cdots,
\end{equation*}
we see that $C_k(A)=C_k(B)$ for every $k$ and therefore $C(A)=C(B)$. The
equality $C_k(A)=C_k(B)$ is obvious for $k=0$ because both sides are equal
to zero. If it is true for some $k-1\ge 0$, then we have $y_k\in C_k(A)$ if
and only if 
\begin{align*}
&y_k\in A, \\
&\card\left( C_{k-1}(A)\cup\{y_k\}\right) \le q \\
&\card\left( (C_{k-1}(A)\cup\{y_k\})\cap Y_n\right) \le q_n\text{ for all }n,
\end{align*}
and $y_k\in C_k(B)$ if and only if 
\begin{align*}
&y_k\in B, \\
&\card\left( C_{k-1}(B)\cup\{y_k\}\right) \le q \\
&\card\left( (C_{k-1}(B)\cup\{y_k\})\cap Y_n\right) \le q_n\text{ for all }n.
\end{align*}
Since $C_{k-1}(A)=C_{k-1}(B)$ by the induction hypothesis, the equality $%
C_k(A)=C_k(B)$ will follow if we show that $y_k\in A\Longleftrightarrow
y_k\in B$ if the last two conditions are satisfied. Since $C(A)\subset
B\subset A$ and since $C(B)\subset B$ ($C$ is a choice map), we have 
\begin{align*}
\intertext{and}
&y_k\in B\Longrightarrow y_k\in C(B)\Longrightarrow y_k\in B\Longrightarrow
y_k\in A.
\end{align*}

\medskip

(b) The range of an idempotent choice map coincides with the set of its
fixed points: 
\begin{equation*}
\{C(A)\ :\ A\subset X\}=\{A\subset X\ :\ C(A)=A\}.
\end{equation*}
\end{remark}

\begin{example}
\label{e65} In Example \ref{e63} (b) the consistent choice map $C_F$ cannot
be obtained by the construction of Theorem \ref{t310} without the
disjointness condition (see Remark \ref{r64} (a)). A stronger counterexample
is the following. We consider a three-point set $X=\{a,b,c\}$ and the
following two choice maps:

\begin{tabular}{c||c|c|c|c|c|c|c|c}
$A$ & $\varnothing$ & $\{a\}$ & $\{b\}$ & $\{c\}$ & $\{a,b\}$ & $\{a,c\}$ & $%
\{b,c\}$ & $\{a,b,c\}$ \\ \hline\hline
$C_W(A)$ & $\varnothing$ & $\{a\}$ & $\{b\}$ & $\{c\}$ & $\{a,b\}$ & $\{c\}$
& $\{b,c\}$ & $\{b,c\}$ \\ \hline
$C_F(A)$ & $\varnothing$ & $\{a\}$ & $\{b\}$ & $\{c\}$ & $\{a\}$ & $\{a,c\}$
& $\{b\}$ & $\{a,c\}$%
\end{tabular}

Both choice maps are defined by the construction of Theorem \ref{t310}. For $%
C_W$ we take $Y=X$ with the preference order $c\succ b\succ a$ and quota $q=2
$, and we set $Y_1=\{a,c\}$ with the quota $q_1=1$. This is a revealing
choice map. The choice map $C_F$ is the one given in Example \ref{e312}
above: a consistent but not revealing choice map because the disjointness
condition is not satisfied.

In order to find a stable set $S$ we have to cover $X=\{a,b,c\}$ by two sets 
$S_W$ and $S_F$ satisfying $C_W(S_W)=S=C_W(S)$ and $C_F(S_F)=S=C_F(S)$. The
equalities $C_W(S)=S=C_F(S)$ are satisfied if and only if $S$ has at most
one element, so that there are four candidates for the stable set $S$. We
can see easily from the table that in order to have $C_W(S_W)=S=C_W(S)$,

\begin{itemize}
\item in case $S=\varnothing$ we must have $S_W=S_F=\varnothing$;

\item in case $S=\{a\}$ we must have $S_W=\{a\}$ and $S_F\subset\{a,b\}$;

\item in case $S=\{b\}$ we must have $S_W=\{b\}$ and $S_F\subset\{b,c\}$;

\item in case $S=\{c\}$ we must have $S_W\subset\{a,c\}$ and $S_F=\{c\}$.
\end{itemize}

Since $S_W\cup S_F\ne X$ in all these cases, we conclude that there is no
stable set.
\end{example}

\begin{proof}[Proof of Theorem \protect\ref{t313}]
If $C(A)\subset B$, then setting $A_i:=A\cap X_i$ and $B_i:=B\cap X_i$ we
have 
\begin{align*}
A\cap C(B)\subset C(A) &\Longleftrightarrow \left( A\cap C(B)\right) \cap
X_i\subset C(A)\cap X_i\text{ for all }i \\
&\Longleftrightarrow A_i\cap C_i(B_i)\subset C_i(A_i)\text{ for all }i. %
\qedhere
\end{align*}
\end{proof}


\begin{thebibliography}{99}
\bibitem{Ada2000} H. Adachi, \emph{On a characterization of stable matchings}%
, Econom. Lett. 68 (2000), 1, 43--49.

\bibitem{AlkGal2003} A. Alkan, D. Gale, \emph{Stable schedule matching under
revealed preference}, J. Economic Theory 112 (2003), 289--306.

\bibitem{BaiBal2002} M. Baiou, M. Balinski, \emph{The stable allocation (or
ordinal transportation) problem}, Math. Oper. Res. 27 (2002), 485--503.

\bibitem{CraKel1982} V. P. Crawford, Alexander S. Kelso jun., \emph{Job
matching, coalition formation, and gross substitutes}, Econometrica 50
(1982), 1483--1504.

\bibitem{DayMil2007} R. Day, P. Milgrom, \emph{Core-selecting package
auctions}, International J. Game Theory 36 (2008), 393--407.

\bibitem{Fed1992} T. Feder, \emph{A new fixed point approach for stable
networks and stable marriages}, Twenty-first Symposium on the Theory of
Computing (Seattle, WA, 1989). J. Comput. System Sci. 45 (1992), 2, 233--284.

\bibitem{Fle2003} T. Fleiner, \emph{A fixed-point approach to stable
matchings and some applications}, Math. Oper. Res. 28 (2003), 103--126.

\bibitem{GalSha1962} D. Gale, L. S. Shapley, \emph{College admissions and
the stability of marriage}, Amer. Math. Monthly 69 (1962), 1, 9--15.

\bibitem{HatMil2005} J. W. Hatfield, P. Milgrom, \emph{Matching with
contracts}, American Economic Review 95 (2005), 4, 913--935.

\bibitem{Kna1928} B. Knaster, \emph{Un th\'eor\`eme sur les fonctions
d'ensembles}, Ann. Soc. Polon. Math. 6 (1928), 133--134.

\bibitem{Rot1984} A. E. Roth, \emph{Stability and polarization of interests
in job matching}, Econometrica 52 (1984), 1, 47--57.

\bibitem{Rot1985} A. E. Roth, \emph{Conflict and coincidence of interest in
job matching: some new results and open questions}, Math. Oper. Res. 10
(1985), 3, 379--389.

\bibitem{RotSot1990} A. E. Roth, M. Sotomayor, \emph{Two-sided matching. A
study in game-theoretic modeling and analysis}, Econometric Society
Monographs, 18. Cambridge University Press, Cambridge, 1990.

\bibitem{Sub1994} A. Subramanian, \emph{A new approach to stable matching
problems}, SIAM J. Comput. 23 (1994), 4, 671--700.

\bibitem{Tar1928} A. Tarski, \emph{Quelques th\'eor\`emes g\'en\'eraux sur
les images d'ensembles}, Ann. Soc. Polon. Math. 6 (1928), 132--133.

\bibitem{Tar1955} A. Tarski, \emph{A lattice-theoretical fixpoint theorem
and its applications}, Pacific J. Math. 5 (1955), 285--310.
\end{thebibliography}
\end{document}